\newtheorem{proposition}{Proposition}[section]
\newtheorem{lemma}[proposition]{Lemma}
\newtheorem{corollary}[proposition]{Corollary}
\newtheorem{theorem}[proposition]{Theorem}
\theoremstyle{definition}
\newtheorem{definition}[proposition]{Definition}
\theoremstyle{remark}
\newtheorem{remark}[proposition]{Remark}
\newcommand{\thlabel}[1]{\label{th:#1}}
\newcommand{\selabel}[1]{\label{se:#1}}
\newcommand{\seref}[1]{Section~\ref{se:#1}}
\newcommand{\lelabel}[1]{\label{le:#1}}
\newcommand{\leref}[1]{Lemma~\ref{le:#1}}
\newcommand{\prlabel}[1]{\label{pr:#1}}
\newcommand{\prref}[1]{Proposition~\ref{pr:#1}}
\newcommand{\colabel}[1]{\label{co:#1}}
\newcommand{\coref}[1]{Corollary~\ref{co:#1}}
\newcommand{\relabel}[1]{\label{re:#1}}
\newcommand{\reref}[1]{Remark~\ref{re:#1}}
\newcommand{\delabel}[1]{\label{de:#1}}
\newcommand{\eqlabel}[1]{\label{eq:#1}}
\newcommand{\equref}[1]{(\ref{eq:#1})}
\def\equal#1{\smash{\mathop{=}\limits^{#1}}}
\newcommand{\Hom}{{\rm Hom}}
\newcommand{\Aut}{{\rm Aut}\,}
\def\ot{\otimes}
\def\NN{{\mathbb N}}
\def\ZZ{{\mathbb Z}}
\newcommand{\Cc}{\mathcal{C}}
\newcommand{\Dd}{\mathcal{D}}
\newcommand{\Hh}{\mathcal{H}}
\newcommand{\Mm}{\mathcal{M}}
\def\*C{{}^*\hspace*{-1pt}{\Cc}}
\def\text#1{{\rm {\rm #1}}}
\def\ol{\overline}
\begin{document}
\title[Monoidal Hom-Hopf Algebras]{Monoidal Hom-Hopf Algebras}

\author{S. Caenepeel}
\address{Faculty of Engineering,
Vrije Universiteit Brussel, Pleinlaan 2, B-1050 Brussels, Belgium}
\email{scaenepe@vub.ac.be}
\urladdr{http://homepages.vub.ac.be/\~{}scaenepe/}
\author{I. Goyvaerts}
\address{Faculty of Engineering,
Vrije Universiteit Brussel, Pleinlaan 2, B-1050 Brussels, Belgium}
\email{Isar.Goyvaerts@vub.ac.be}

\subjclass[2010]{16T05}

\keywords{Hom-algebra, Hom-Lie algebra, Hom-Hopf algebra}

\begin{abstract}
Hom-structures (Lie algebras, algebras, coalgebras, Hopf algebras) have been
investigated in literature recently. We study Hom-structures from the point of
view of monoidal categories; in particular, we introduce a symmetric monoidal
category such that (certain classes of) Hom-algebras coincide with algebras in this monoidal category.
Similar properties for Hom-coalgebras, Hopf and Lie algebras hold.
\end{abstract}
\maketitle

\section*{Introduction}\selabel{0}
Hom-Lie algebras were introduced in \cite{HS}, motivated by the study of deformations of the Witt and the Virasoro algebras. They have been investigated
by various authors ever since. The idea is that the Jacobi identity is replaced by
the so-called Hom-Jacobi identity, namely
\begin{equation}
[\alpha(x),[y,z]]+[\alpha(y),[z,x]]+ [\alpha(z),[x,y]]=0,
\end{equation}
where $\alpha$ is a linear endomorphism of the underlying vector space. Hom-algebras have been
introduced in \cite{MS}. Now the associativity is replaced by Hom-associativity:
\begin{equation}
\alpha(a)(bc)=(ab)\alpha(c).
\end{equation}
Hom-coassociativity for a Hom-coalgebra can be considered in a similar way,
see \cite{MS2,MS3}. Definitions of Hom-bialgebras and Hom-Hopf algebras also
have been proposed, see \cite{MS2,MS3,Yau3}. 
\\The issue is that a variety of
different definitions is possible, and that it is not always evident to decide from the existing literature what are the ``right" definitions to use.
For example, sometimes it is needed that $\alpha$ is multiplicative (that is,
$\alpha[a,b]=[\alpha(a),\alpha(b)]$ in the Lie case, and $\alpha(ab)=\alpha(a)
\alpha(b)$ in the algebra case). In the definition of a Hom-bialgebra, it is not
clear whether one should take different endomorphisms describing the Hom-associativity
and the Hom-coassociativity; see \cite{MS3}, where different endomorphisms
are allowed, and \cite{Yau3}, where the two endomorphisms are the same.
In \cite{Fregier}, the authors compare all possible variations of the definition of
a Hom-algebra.\\
The aim of this paper is to understand Hom-structures from the point of view of
monoidal categories. This leads in a natural way to the definition of \textit{monoidal} Hom-algebras,
Hom-coalgebras, etc. These monoidal Hom-structures are certain classes of the Hom-structures found in literature. It turns out that the monoidal categorical point of view can help us, amongst other things, to point out what definitions are useful for certain types of applications.
We will construct a symmetric monoidal category, and then
introduce monoidal Hom-algebras, Hom-coalgebras etc. as algebras, coalgebras etc.
in this monoidal category. The remarkable thing is that we have to consider a
category in which the associativity constraint is non-trivial.\\
In \seref{1},  we introduce the category $\Hh(\Cc)$
associated to a monoidal category $\Cc$; the associativity and unit constraints come
in two versions, leading to two different monoidal structures on $\Hh(\Cc)$, denoted
$\Hh(\Cc)$ and $\widetilde{\Hh}(\Cc)$. The objects of the latter consist of pairs formed by an object of $\Cc$
together with an automorphism of this object. The associativity constraint is given
by \equref{1.1.3}, and involves the automorphisms and their inverses. Actually, $\Hh(\Cc)$ and $\widetilde{\Hh}(\Cc)$
are tensor isomorphic (\prref{1.7}), leading to structure theorems for algebras,
coalgebras, ... in $\widetilde{\Hh}(\Cc)$.
\\In \seref{2}, we apply
this construction to the category of $k$-modules, and look at algebras in the category
$\widetilde{\Hh}(\Mm_k)$. For the associativity,
we obtain the formula
\begin{equation}
(ab)c=\alpha(a)(b\alpha^{-1}(c)),
\end{equation}
which is clearly equivalent to the aforementioned condition for Hom-associativity. By definition, the
automorphism $\alpha$ of the algebra $A$ has to be invertible, and it has to
be multiplicative, because the multiplication map of $A$ has to be a morphism
in the category $\widetilde{\Hh}(\Mm_k)$. Although the definitions in \cite{MS,MS2,MS3} do not require $\alpha$ to be invertible, neither multiplicative, it seems that, in order to have certain ``nice" properties for Hom-algebras, Hom-coalgebras, ...,
we need these extra conditions. This is illustrated by some of our results; for
example, we prove in \prref{2.4.4} that the category of modules over a monoidal Hom-bialgebra
is monoidal, and the proof does not work if $\alpha$ is not multiplicative. In \seref{2a}, we generalize the Fundamental Theorem for
Hopf-modules to the Hom-setting. Another example of an application of the monoidal categorical point of view in the Hom-world is presented in \seref{3}, where we consider a Hom-version of the classical group algebra (applying our construction to the category of
sets). In this case, the monoidal categorical framework offers a possibility to see what the unitality conditions look like. In \seref{4}, as a special case of Lie algebras in the
additive symmetric monoidal category $\widetilde{\Hh}(\Mm_k)$, we recover monoidal Hom-Lie algebras.
Compared to the Hom-Lie algebras considered in literature, they also satisfy
the Hom-Jacobi identity; the additional requirement is that $\alpha$ is multiplicative and invertible. 
As another example of a Hom-Hopf algebra, we introduce the tensor Hom-algebra in
\seref{5}. This is applied in \seref{7} to the construction of the universal enveloping
(Hom-Hopf) algebra of a (monoidal) Hom-Lie algebra.

\section{Preliminary results}\selabel{Hopfalgbraidedmonoidal}
We assume some (at least working) knowledge of (braided) monoidal categories and categorical constructions. In this respect, \cite{K} and \cite{ML} can be useful references.
\\Let us start by mentioning some results concerning the behaviour of some algebraic objects in (braided) monoidal categories under (braided) monoidal functors. This will be used later on. 

\begin{definition}
Let $\Cc=(\Cc,\ot,I,a,l,r)$ be a monoidal category. An {\em algebra}\index{algebra} (in literature sometimes also called {\em monoid}) in $\Cc$ is a triple
$A=(A,m,\eta)$, where $A\in \Cc$ and $m: A\ot A\to A$ and $\eta: I\to A$ are
morphisms in $\Cc$ such that the following diagrams commute:\index{unitality}\index{associativity}
$$
\xymatrix{
(A\ot A)\ot A\ar[rr]^{m\ot A}\ar[d]_{(A\ot m)\circ a_{A,A,A}}&& A\ot A\ar[d]^{m}\\
A\ot A\ar[rr]^{m}&&A}~~~~~~
\xymatrix{I\ot A\ar[r]^{u\ot A}\ar[rd]_{l_A}&
A\ot A\ar[d]^{m}&A\ot I\ar[l]_{A\ot u}\ar[dl]^{r_A}\\
&A&}
$$
\end{definition}
We can define a {\em coalgebra} (or {\em comonoid}) in a monoidal category $\Cc$\index{coalgebra} as an algebra in the opposite category $\Cc^{\rm op}$. Here, $\Cc^{op}=(\Cc^{op},\ot^{op},I)$ denotes the opposite category of $\Cc$ and $\ot^{op}: \Cc^{op}\times\Cc^{op}\to \Cc^{op}$ the opposite tensor product functor induced in the obvious way by $\ot$.

Let $\Cc=(\Cc,\ot,I,c)$ be a braided monoidal category (in order to make our notation not too heavy, we will consider $\Cc$ to be strict). 
A {\em bialgebra} (or {\it bimonoid})\index{bimonoid} in $\Cc$ is a pentuple
$H=(H,m,\eta,\Delta,\varepsilon)$, where $(H,m,\eta)$ is an algebra and 
$(H,\Delta,\varepsilon)$ a coalgebra in $\Cc$, such that $\Delta:H\to H\ot H$ and $\varepsilon: H\to I$ are algebra morphisms, or, equivalently, such that $m:H\ot H\to H$ and $\eta:I\to H$ are coalgebra morphisms. Here the algebra structure on $H\ot H$ is given by 
\begin{equation}
\xymatrix{H\ot H\ot H\ot H\ar[rr]^-{H\ot c_{H,H}\ot H} && H\ot H\ot H\ot H\ar[r]^-{m\ot m} & H\ot H},
\end{equation}
and the coalgebra structure by 
\begin{equation}
\xymatrix{H\ot H\ar[r]^-{\Delta\ot \Delta} & H\ot H\ot H\ot H\ar[rr]^-{H\ot c_{H,H}\ot H}&& H\ot H\ot H\ot H}.
\end{equation}
Let $C=(C,\Delta_C,\varepsilon_C)$ be a coalgebra, and $A=(A,m_A,\eta_A)$ an algebra in $\Cc$. We can now define
a product $*$ on $\Hom_\Cc(C,A)$ as follows: for $f,g: C\to A$, let
$f*g$ be defined by
$f*g = m_A\circ(f\ot g)\circ\Delta_C$.
$*$ is called the {\it convolution product}\index{product!convolution}.
Observe that $*$ is associative, and that for any 
$f\in\Hom_\Cc(C,A)$, we have that
$f*(\eta_A\circ\varepsilon_C)=(\eta_A\circ\varepsilon_C)*f=f,$
so the convolution product makes $\Hom_\Cc(C,A)$ into a monoid ($\eta_A\circ\varepsilon_C$ being the unit element for $*$).

Now suppose that $H$ is a bialgebra in $\Cc$, and take $H=A=C$ in the above construction.
If the identity $H$ of $H$ has a convolution inverse $S=S_H$, then we
say that $H$ is a {\em Hopf algebra}\index{Hopf algebra} (or {\em Hopf monoid})\index{Hopf monoid} in $\Cc$. $S$ is called the 
{\em antipode}\index{antipode} of $H$.

\begin{proposition}\prlabel{1.8}
Let $(F,\varphi_0,\varphi_2)$ be a monoidal functor between the monoidal
categories $\Cc=(\Cc,\ot,I,a,l,r)$ and $\Dd=(\Dd,\odot, J,a',l',r')$. If $A$ is an
algebra in $\Cc$, then $F(A)$ is an algebra in $\Dd$, with
$$m_{F(A)}=F(m_A)\circ \varphi_2(A,A)~~;~~\eta_{F(A)}=F(\eta)\circ \varphi_0.$$
In a similar way, if $(F,\psi_0,\psi_2)$ is a comonoidal functor from $\Cc$ to $\Dd$,
and $C$ is a coalgebra in $\Cc$, then $F(C)$ is a coalgebra in $\Dd$ with
$$\Delta_{F(C)}=\psi_2(C,C)\circ F(\Delta_C)~~;~~\varepsilon_{F(C)}=\psi_0
\circ F(\varepsilon_C).$$
\end{proposition}

Assume that $F$ is strong monoidal and braided, and that $H$ is a bialgebra in $\Cc$. It follows
from \prref{1.8} that $F(H)$ is an algebra and a coalgebra in $\Dd$, and it is easy to
show that it is even a bialgebra in $\Dd$. Let us now show that we have a similar
property for Hopf algebras.

\begin{lemma}\lelabel{1.11}
Let $F:\ \Cc\to \Dd$ be a strong monoidal functor, $A$ an algebra in $\Cc$
and $C$ a coalgebra in $\Cc$. Then for $f,g\in \Hom_\Cc(C,A)$, we have that
$$F(f*g)=F(f)*F(g).$$
\end{lemma}

\begin{proof}
From the naturality of $\varphi_2$, it follows that the diagram
\begin{equation}\eqlabel{1.11.1}
\xymatrix{
F(C)\odot F(C)\ar[rr]^{\varphi_2(C,C)}\ar[d]_{F(f)\odot F(g)}&&
F(C\ot C)\ar[d]^{F(f\ot g)}\\
F(A)\odot F(A)\ar[rr]^{\varphi_2(A,A)}&&F(A\ot A)}
\end{equation}
commutes. Then we easily compute that
\begin{eqnarray*}
&&\hspace*{-2cm}
F(f)*F(g)=m_{F(A)}\circ (F(f)\odot F(g))\circ \Delta_{F(C)}\\
&=& F(m_A)\circ \varphi_2(A,A)\circ (F(f)\odot F(g))\circ \varphi_2^{-1}(C,C)\circ F(\Delta_C)\\
&\equal{\equref{1.11.1}}&
F(m_A\circ (f\ot g)\circ \Delta_C)=F(f*g).
\end{eqnarray*}
\end{proof}

As a consequence, we immediately obtain the following result.

\begin{proposition}\prlabel{1.12}
Let $(F,\varphi_0,\varphi_2)$ be a strong monoidal braided functor between the
braided monoidal categories $\Cc$ and $\Dd$. If $H$ is a Hopf algebra in $\Cc$,
then $F(H)$ is a Hopf algebra in $\Dd$.
\end{proposition}

\begin{proof}
We have already seen that our result holds for bialgebras. If $S$ is an antipode for $H$,
then it follows from \leref{1.11} that $F(S)$ is an antipode for $F(H)$.
\end{proof}

Symmetric monoidal categories are well-suited to consider cyclic permutations on copies of the same object. Moreover, if we assume additivity, these categories offer a framework to generalize the classical definition of Lie algebra and incorporate examples of  ``generalized" Lie algebras, see \cite{Khar} for instance. We present another illustration of this fact in \seref{4}.  
\\\\Consider parenthized monomials in $n$ non-commuting variables $X_1,\cdots,X_n$,
such that every variable $X_i$ occurs one time in the monomial. Examples of such
polynomials in de case $n=4$ are
$(X_1X_3)(X_2X_4)$, $X_1(X_4(X_3X_2))$ etc...
Let $U_n$ be the set of all these polynomials. We describe these polynomials.\\
If we delete the parentheses in $P\in U_n$, then we obtain a monomial of the form
$X_{\sigma(1)}X_{\sigma(2)}\cdots X_{\sigma(n)}$, where $\sigma\in S_n$ is a
permutation of $\{1,\cdots,n\}$. Thus we obtain a surjection $U_n\to S_n$.\\
Consider the polynomials $P\in U_n$ such that the corresponding permutation in $S_n$
is the identity; such polynomials are in bijective correspondence to $T_n$, the set
of planar binary trees with $n$ leaves and one root (see for example \cite{Yau1}).\\
Thus we can consider $U_n$ as the direct product $T_n\times S_n$: for $\sigma\in S_n$,
$\psi\in T_n$, take the polyniomial $X_1X_2\cdot X_n$ with parenthesis corresponding
to $\psi$, and then permute the variable $X_1,\cdots,X_n$ using $\sigma$.
The cardinality of $T_n$ is the $(n-1)$-th Catalan number $C_n={(2n-2)!/((n-1)!n!)}$,
$\#(S_n)=n!$, so we find that $\#(U_n)={(2n-2)!/(n-1)!}$. $U_n$ is a right $S_n$-set:
 $(\psi,\sigma)\sigma'=(\psi,\sigma\circ \sigma')$. We embed $T_n$ in $U_n$ by
 identifying $\psi$ and $(\psi,e)$.\\
Now assume that $\Cc$ is a symmetric monoidal category with symmetry $c$, and take
$u=\psi\sigma\in U_n$. 
We have functors
$$p_\sigma:\ \Cc^n\to \Cc^n,~~p_\sigma(M_1,\cdots,M_n)=(M_{\sigma(1)},\cdots,
M_{\sigma(n)})$$
and
$\psi(\ot):\ \Cc^n\to \Cc$, where $\psi(\ot)(M_1,\cdots,M_n)$ is the $\psi$-parenthized
$n$-fold tensor product $M_1\ot M_2\ot\cdots\ot M_n$. Let $\ot^u=
\psi(\ot)\circ p_\sigma:\ \Cc^n\to \Cc$. Take $u,u'\in U_n$. Using compositions of the associativity
constraint $a$ and the symmetry $c$, tensored up with some identity natural transformations, 
we can construct a natural transformation
$$b(u,u'):\ \ot^u\to \ot^{u'}.$$
It follows from the coherence conditions of $a$ and $c$ that this natural transformation is
unique.\\
We fix an element $t^{n}\in T_n$ corresponding to the following parenthized monomial:
$$X_1(X_2(\cdots (X_{n-1}X_n)\cdots)).$$
$t^n$ can be defined recursively as follows: $t^n=t^1 \vee t^{n-1}$, where $t^1$ is
the unique element of $T^1$, and $\vee$ denotes the grafting of trees, see
\cite{Yau1} e.g..\\
Let $\tau$ be the nontrivial element of $S_2$, and $s\in S_3$ the cyclic permutation
$s(1)=2$, $s(2)=3$, $s(3)=1$. Then we easily compute
$$b(t^2, t^2\tau)=c:\ \ot^{t^2}=\ot\to \ot^{t^2 \tau }=\ot\circ p_\tau;$$
$$b(t^3, t^3 s)=a\circ (c\ot 1_\Cc)\circ a^{-1}\circ (1_\Cc\ot c):\
\ot^{t^3}\to \ot^{t^3 s};$$
$$b(t^3, t^3 s^2)=(1_\Cc\ot c)\circ a\circ (c\ot 1_\Cc)\circ a^{-1}:\
\ot^{t^3}\to \ot^{t^3 s^2}.$$
In particular, for $L\in \Cc$, we have the following morphisms in $\Cc$:
$$b(t^3, t^3 s)_{L,L,L},~b(t^3, t^3 s^2)_{L,L,L}:\ L\ot (L\ot L)\to L\ot (L\ot L).$$ 
Let us denote $t_c:=b(t^3, t^3 s)_{L,L,L}$ and $w_c:=b(t^3, t^3 s^2)$, just to lighten notation in the following
\begin{definition}\delabel{LieAlgsymm}
A \textit{Lie algebra in} $\Cc$\index{Lie algebra} is a couple $(L,[-,-])$, denoted $L$ for short if there is no confusion possible, where $L$ is an object of $\Cc$ and $[-,-]:\ L\ot L\to L$ is a morphism (which we call a {\em Lie bracket})\index{Lie bracket} in $\Cc$ that satisfies\index{antisymmetry}\index{identity!Jacobi}
\begin{eqnarray}\eqlabel{ASsymm}
[-,-] \circ (1_{L\ot L} + c_{L,L})&=&0_{L\ot L,L},\\
\eqlabel{Jacsymm}
[-,-]\circ (1_{L}\ot [-,-])\circ (1_{L\ot(L\ot L)}+ t_c+ w_c)&=&0_{L\ot (L\ot L),L}.
\end{eqnarray}
\end{definition}
\section{The Hom-construction}\selabel{1}
Let $\Cc$ be a category. We introduce a new category $\Hh(\Cc)$ as follows:
objects are ordered pairs $(M,\mu)$, with $M\in \Cc$ and $\mu\in \Aut_\Cc(M)$.
A morphism $f:\ (M,\mu)\to (N,\nu)$ is a morphism $f:\ M\to N$ in $\Cc$ such that
\begin{equation}\eqlabel{1.1.1}
\nu\circ f=f\circ \mu.
\end{equation}
$\Hh(\Cc)$ will be called the \textit{Hom-category}\index{category!Hom-} associated to $\Cc$. If $(M,\mu)\in \Cc$,
then $\mu:\ M\to M$ is obviously a morphism in $\Hh(\Cc)$.\\
Now assume that $\Cc=(\Cc,\ot,I,a,l,r)$ is a monoidal category. It is easy to show that
$\Hh(\Cc)=(\Hh(\Cc),\ot,(I,I),a,l,r)$ is also a monoidal category. The tensor product of
$(M,\mu)$ and $ (N,\nu)$ in $\Hh(\Cc)$ is given by the formula
\begin{equation}\eqlabel{1.1.2}
(M,\mu)\ot (N,\nu)=(M\ot N,\mu\ot \nu).
\end{equation}
On the level of morphisms, the tensor product is the tensor products of morphisms in $\Cc$.
We will now modify this construction.

\begin{proposition}\prlabel{1.1}
Let $\Cc=(\Cc,\ot,I,a,l,r)$ be a monoidal category. Then
$\widetilde{\Hh}(\Cc)=(\Hh(\Cc),\ot,(I,I),\tilde{a},\tilde{l},\tilde{r})$ is also a monoidal category.
The tensor product is given by \equref{1.1.2}. The associativity constraint $\tilde{a}$ is given 
by the formula
\begin{equation}\eqlabel{1.1.3}
\tilde{a}_{M,N,P}=a_{M,N,P}\circ ((\mu\ot N)\ot \pi^{-1})
=(\mu\ot (N\ot \pi^{-1}))\circ a_{M,N,P},
\end{equation}
for $(M,\mu),(N,\nu),(P,\pi)\in \Hh(\Cc)$. The unit constraints $\tilde{l}$ and
$\tilde{r}$ are given by
\begin{equation}\eqlabel{1.1.4}
\tilde{l}_M=\mu\circ l_M=l_M\circ (I\ot \mu)~~;~~
\tilde{r}_M= \mu\circ r_M=r_M\circ (\mu\ot I).
\end{equation}
\end{proposition}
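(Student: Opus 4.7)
The plan is to verify in turn each piece of structure making $\widetilde{\Hh}(\Cc)$ into a monoidal category, at every step reducing the claim for the modified constraint to the corresponding statement for the original one via functoriality of $\ot$ and naturality of $a, l, r$ in $\Cc$.

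First I would check that $\tilde{a}_{M,N,P}$, $\tilde{l}_M$, $\tilde{r}_M$ are well defined, i.e.\ that the two expressions for each in \equref{1.1.3} and \equref{1.1.4} agree. This is just naturality of $a$ (resp.\ $l$, $r$) applied to the morphisms $\mu:M\to M$ and $\pi^{-1}:P\to P$ (resp.\ $\mu$). Next I would verify that each of $\tilde{a}_{M,N,P}$, $\tilde{l}_M$, $\tilde{r}_M$ is actually a morphism in $\Hh(\Cc)$, i.e.\ intertwines the relevant automorphisms on source and target as in \equref{1.1.1}. Substituting the defining formulas and using functoriality of $\ot$ together with naturality of $a$, both sides of the required equation collapse to the same expression: the $\mu\circ\mu$ and $\pi^{-1}\circ\pi$ combinations produce matching $\mu^2$ and $\Id$ terms. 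Invertibility of $\tilde{a}, \tilde{l}, \tilde{r}$ is immediate since each factor in the definition is an isomorphism in $\Cc$, hence in $\Hh(\Cc)$. Naturality of $\tilde{a}, \tilde{l}, \tilde{r}$ in $\Hh(\Cc)$ follows from naturality of $a, l, r$ in $\Cc$ combined with the fact that a morphism $f:(M,\mu)\to(M',\mu')$ in $\Hh(\Cc)$ satisfies $\mu'\circ f=f\circ\mu$, which lets the correction factors be shifted through.

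The main technical step is the pentagon axiom for $\tilde{a}$. The strategy is to substitute the definition $\tilde{a}_{A,B,C}=a_{A,B,C}\circ((\alpha\ot B)\ot\gamma^{-1})$ on both sides of the pentagon, then apply naturality of $a$ repeatedly to slide every correction factor to one end of the composition, and finally use functoriality of $\ot$ to combine them. When this is carried out for the four-object pentagon on $(M,\mu), (N,\nu), (P,\pi), (Q,\theta)$, both sides reduce to an instance of the classical pentagon for $a$ in $\Cc$ composed on the right with the same correction morphism, of the form $((\mu^2\ot\nu)\ot\pi^{-1})\ot\theta^{-2}$. The pentagon axiom in $\Cc$ then finishes the argument. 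The triangle axiom is handled analogously: substitute the definitions of $\tilde{a}$, $\tilde{l}$, $\tilde{r}$, push the $\mu$-factors through by naturality, and invoke the triangle axiom for $(a,l,r)$ in $\Cc$.

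The main obstacle is the careful bookkeeping of which tensor slot each $\mu$, $\nu$, $\pi^{-1}$, $\theta^{-1}$ ends up in after each application of naturality or functoriality; once this is organized cleanly, the pentagon and triangle verifications are routine.
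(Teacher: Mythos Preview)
Your proposal is correct and follows essentially the same approach as the paper's proof: both verify that $\tilde{a}$, $\tilde{l}$, $\tilde{r}$ are morphisms in $\Hh(\Cc)$ and are natural, then reduce the pentagon for $\tilde{a}$ to the pentagon for $a$ postcomposed with the correction factor $((\mu^2\ot\nu)\ot\pi^{-1})\ot\theta^{-2}$, and handle the triangle axiom in the same way. The only cosmetic difference is that you explicitly mention checking that the two expressions in \equref{1.1.3} and \equref{1.1.4} agree and that the constraints are invertible, which the paper leaves implicit.
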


\begin{proof}
From the naturality of $a$, it follows that ${a}_{M,N,P}$ and $\tilde{a}_{M,N,P}$
are morphisms in $\Hh(\Cc)$. Let us show that $\tilde{a}$ is natural in $M,N$ and $P$.
Let $f:\ M\to M'$, $g:\ N\to N'$ and $h:\ P\to P'$ be morphisms in $\Hh(\Cc)$,
and consider the diagram
$$\xymatrix{
(M\ot N)\ot P\ar[rr]^{(\mu\ot N)\ot \pi^{-1}}\ar[d]^{(f\ot g)\ot h}&&
(M\ot N)\ot P\ar[rr]^{{a}_{M,N,P}}\ar[d]^{(f\ot g)\ot h}&&
M\ot(N\ot P)\ar[d]^{f\ot (g\ot h)}\\
(M'\ot N')\ot P'\ar[rr]^{(\mu'\ot N')\ot {\pi'}^{-1}}&&
(M'\ot N')\ot P'\ar[rr]^{{a}_{M',N',P'}}&&
M'\ot (N'\ot P')}$$
The left square commutes since $f,g,h\in \Hh(\Cc)$. The right square commutes since
$a$ is natural. Hence the whole diagram commutes, and this shows that $\tilde{a}$
is natural.\\
We will next show that $\tilde{a}$ satisfies the Pentagon Axiom (see \cite[(XI.2.6)]{K} e.g.). In the following
computation, the naturality of $a$ is used several times. 
\\For $(M,\mu),(N,\nu),(P,\pi),(Q,q)\in \Hh(\Cc)$, we have
\begin{eqnarray*}
&&\hspace*{-1cm}
\tilde{a}_{M,N,P\ot Q}\circ \tilde{a}_{M\ot N,P,Q}\\
&\equal{\equref{1.1.3}}&
{a}_{M,N,P\ot Q}\circ (\mu \ot N\ot (\pi^{-1}\ot q^{-1}))\circ
{a}_{M\ot N,P,Q}\circ (((\mu\ot \nu)\ot P)\ot q^{-1})\\
&=&
{a}_{M,N,P\ot Q}\circ {a}_{M\ot N,P,Q}\\
&&\hspace*{8mm} \circ
(((\mu\ot N)\ot \pi^{-1})\ot q^{-1})\circ (((\mu\ot \nu)\ot P)\ot q^{-1})\\
&=&
{a}_{M,N,P\ot Q}\circ {a}_{M\ot N,P,Q}\circ
 (((\mu^2\ot \nu)\ot \pi^{-1})\ot q^{-2});\\
 &&\hspace*{-1cm}
(M\ot \tilde{a}_{N,P,Q})\circ \tilde{a}_{M,N\ot P,Q}
\circ (\tilde{a}_{M,N,P}\ot Q)\\
 &\equal{\equref{1.1.3}}&
 (M\ot {a}_{N,P,Q})\circ (M\ot ((\nu\ot P)\ot q^{-1}))\circ {a}_{M,N\ot P,Q}\\
&&\hspace*{8mm}
 \circ ((\mu\ot (N\ot P))\ot q^{-1})
\circ ({a}_{M,N,P}\ot Q)\circ (((\mu\ot N)\ot \pi^{-1})\ot Q)\\
  &=&(M\ot \tilde{a}_{N,P,Q})\circ {a}_{M,N\ot P,Q}
   \circ (a_{M,N,P}\ot Q)\circ (((\mu^2\ot \nu)\ot \pi^{-1})\ot q^{-2})
  \end{eqnarray*}
  Both expressions are equal, since the Pentagon Axiom holds for $a$.\\
  We will now show that $\tilde{l}$ is natural: take $f:\ M\to N$ in $\Hh(\Cc)$,
  and consider the diagram
  $$\xymatrix{
  I\ot M \ar[rr]^{l_M}\ar[d]^{I\ot f}&&M\ar[rr]^\mu\ar[d]^{f}&& M\ar[d]^{f}\\
  I\ot N \ar[rr]^{l_N}&& N\ar[rr]^{\nu}&&N}$$
  The left square commutes, by the naturality of $l$, and the right square commutes
  since $f\in \Hh(\Cc)$. The naturality of $\tilde{r}$ can be proved in a similar way.\\
  Let us finally show that the Triangle Axiom is satisfied.
  \begin{eqnarray*}
  &&\hspace*{-2cm}
  (M\ot \tilde{l}_N)\circ \tilde{a}_{M,I,N}
  \equal{(\ref{eq:1.1.3},\ref{eq:1.1.4})}
  (M\ot \nu)\circ  (M\ot {l}_N)\circ (\mu\ot (I\ot \nu^{-1}))\circ {a}_{M,I,N}\\
  &=&
  (M\ot \nu)\circ (\mu\ot \nu^{-1})\circ (M\ot l_N)\circ a_{M,I,N}\\
  &=& (\mu\ot N)\circ (r_M\ot N)= \tilde{r}_M\ot N.
  \end{eqnarray*}
\end{proof}

\begin{proposition}\prlabel{1.2}
Let $\Cc=(\Cc,\ot,I,a,l,r,c)$ be a braided monoidal category. Then
$\widetilde{\Hh}(\Cc)=(\Hh(\Cc),\ot,(I,I),\tilde{a},\tilde{l},\tilde{r},c)$ is also a 
braided monoidal category.
\end{proposition}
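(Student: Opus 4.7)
The plan is to promote the braiding $c$ of $\Cc$ to a braiding on $\widetilde{\Hh}(\Cc)$ by checking three things: that each $c_{M,N}$ is a morphism in the Hom-category, that the family $c$ is natural with respect to morphisms of $\widetilde{\Hh}(\Cc)$, and that the two hexagon axioms hold relative to the twisted associator $\tilde{a}$. The first two are essentially automatic. Applying naturality of $c$ in $\Cc$ to the automorphisms $\mu:M\to M$ and $\nu:N\to N$ gives $c_{M,N}\circ(\mu\ot\nu)=(\nu\ot\mu)\circ c_{M,N}$, which is exactly the morphism condition \equref{1.1.1} for $c_{M,N}:(M\ot N,\mu\ot\nu)\to(N\ot M,\nu\ot\mu)$ in $\Hh(\Cc)$. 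Naturality of $c$ in $\widetilde{\Hh}(\Cc)$ is then inherited from $\Cc$, since a morphism in $\Hh(\Cc)$ is in particular a morphism in $\Cc$.

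For the hexagon axioms I would expand every occurrence of $\tilde a$ in the first hexagon
\[
\tilde{a}_{N,P,M}\circ c_{M,N\ot P}\circ \tilde{a}_{M,N,P}=(N\ot c_{M,P})\circ \tilde{a}_{N,M,P}\circ (c_{M,N}\ot P)
\]
using the two presentations in \equref{1.1.3}. The pure $a$-parts of the two sides are then identified by the hexagon axiom for $a$ in $\Cc$, so it suffices to shuffle the leftover automorphism factors $((\mu\ot N)\ot\pi^{-1})$, $(\nu\ot(P\ot\mu^{-1}))$, $((\nu\ot M)\ot\pi^{-1})$, etc., past the braidings. This is precisely what naturality of $c$ in each slot provides, via identities such as $c_{M,N}\circ(\mu\ot N)=(N\ot\mu)\circ c_{M,N}$ and $c_{M,P}\circ(M\ot\pi^{-1})=(\pi^{-1}\ot M)\circ c_{M,P}$, together with the naturality of $a$ used already in \prref{1.1}. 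After the dust settles, every $\mu$, $\nu$, $\pi$ on one side meets a matching inverse introduced on the other, and the two expressions collapse to the same morphism. The second hexagon is handled symmetrically, using the other presentation of $\tilde a$ in \equref{1.1.3}.

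The main (mild) obstacle is purely bookkeeping: one must keep track of which tensor factor each automorphism acts on after it is pulled across a braiding, and ensure that the $\mu$'s, $\nu$'s, $\pi$'s and their inverses that \equref{1.1.3} inserts on the two sides of the hexagon cancel in the correct slots. Beyond this, no new input is required; the braided axioms for $\widetilde{\Hh}(\Cc)$ are a formal consequence of the braided axioms for $\Cc$ and of \prref{1.1}.
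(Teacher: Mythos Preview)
Your approach is correct and coincides with the paper's: both verify that $c_{M,N}$ lies in $\Hh(\Cc)$ by naturality of $c$, then check the hexagon by expanding each $\tilde a$ via \equref{1.1.3} and using naturality of $c$ and $a$ to reduce to the hexagon axiom in $\Cc$. One small clarification: the automorphisms do not all cancel to the identity; rather, after pushing them to the far right both sides of the hexagon become the corresponding $\Cc$-hexagon composite precomposed with the common invertible factor $(M\ot\nu)\ot\pi^{-1}$, which can then be cancelled.
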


\begin{proof}
It follows from the naturality of the braiding $c$ that $c_{M,N}$ is a morphism
in $\Hh(\Cc)$, for all $(M,\mu),(N,\nu)\in \Hh(\Cc)$. $c$ is still natural if we view
it as a natural isomorphism $\ot \to \ot\circ \tau$ between functors $\Hh(\Cc)\times
\Hh(\Cc)\to \Hh(\Cc)$. Now let us verify the Hexagon Axioms (H1,H2) from
\cite[XIII.1.1]{K}. We restrict attention to (H1), the proof of (H2) is similar. We need
to show that the following diagram commutes, for any
$(M,\mu),(N,\nu),(P,\pi)\in \Hh(\Cc)$
$$\xymatrix{
(M\ot N)\ot P\ar[rr]^{\tilde{a}_{M,N,P}}\ar[d]^{c_{M,N\ot P}}&&
M\ot (N\ot P)\ar[rr]^{c_{M,N}\ot P}&&
(N\ot P)\ot M\ar[d]^{\tilde{a}_{N,P,M}}\\
(N\ot M)\ot P\ar[rr]^{\tilde{a}_{N,M,P}}&&
N\ot (M\ot P)\ar[rr]^{N\ot c_{M,P}}&&
N\ot (P\ot M)}$$
Indeed,
\begin{eqnarray*}
  &&\hspace*{-1cm}
  \tilde{a}_{N,P,M}\circ c_{M,N\ot P} \circ \tilde{a}_{M,N,P}\\
  &=&
  {a}_{N,P,M}\circ ((\nu\ot P)\ot \mu^{-1})\circ c_{M,N\ot P} \circ {a}_{M,N,P}
  \circ ((\mu \ot N)\ot \pi^{-1})\\
  &=&
   {a}_{N,P,M}\circ c_{M,N\ot P}\circ (\mu^{-1}\ot (\nu\ot P)) \circ {a}_{M,N,P}
  \circ ((\mu \ot N)\ot \pi^{-1})\\
  &=&
   {a}_{N,P,M}\circ c_{M,N\ot P}\circ {a}_{M,N,P}\circ ((M\ot \nu)\ot \pi^{-1});\\
   &&\hspace*{-1cm}
   (N\ot c_{M,P})\circ \tilde{a}_{N,M,P} \circ c_{M,N}\ot P\\
   &=&
   (N\ot c_{M,P})\circ {a}_{N,M,P} \circ ((\nu\ot M)\ot \pi^{-1})\circ c_{M,N}\ot P\\
   &=&
   (N\ot c_{M,P})\circ {a}_{N,M,P}\circ c_{M,N}\ot P\circ ((M\ot \nu)\ot \pi^{-1}).
\end{eqnarray*}
The two expressions are equal since $c$ is a braiding on $\Cc$.
\end{proof}
Recall that an object $M$ in a monoidal category $\Cc$ is called {\em left rigid}\index{object!left rigid} if there exists an object $M^*$ together with morphisms $b_{M}:I\to M\ot M^*$ and $d_{M}:M^*\ot M\to I$ such that the following formulas are satisfied: 
\begin{eqnarray}\eqlabel{1.3.1}
&&r_M\circ (M\ot d_M)\circ a_{M,M^*,M}\circ (b_M\ot M)\circ l_M^{-1}=M;\\
&&\eqlabel{1.3.2}
l_{M^*}\circ (d_M\ot M^*)\circ a^{-1}_{M^*,M,M^*}\circ (M^*\ot b_M)\circ r_{M^*}^{-1}=M^*.
\end{eqnarray}
It is easily verified that if $M$ is left rigid, then the object $M^*$ is unique up to isomorphism. In this situation, we call $M^*$ the {\em left dual}\index{object! left dual} of $M$.
\\A {\em right rigid}\index{object!right rigid} object is defined symmetrically. Remark that if $M$ is left rigid with left dual $M^*$, then $M^*$ is right rigid with right dual $M$.\index{object!right dual}
\\Now let us assume that $\Cc$ has left duality. Let $M^*$ be the left dual of $M\in\Cc$,
and $b_M:\ I\to M\ot M^*$, $d_M:\ M^*\ot M\to I$ the coevaluation and evaluation maps.
We will show that $\widetilde{\Hh}(\Cc)$ also has left duality. 
\\We first recall the following properties. For $f\in \Hom_{\Cc}(M,N)$, we have the dual morphism
$$f^*=l_{M^*}\circ (d_N\ot M^*)\circ ((N^*\ot f)\ot M^*)\circ a^{-1}_{N^*,M,M^*}\circ
(N^*\ot b_M)\circ r_{N^*}^{-1}:\ N^*\to M^*,$$
satisfying the properties
\begin{eqnarray}\eqlabel{1.3.3}
d_M\circ (f^*\ot M)&=&d_N\circ (N^*\ot f);\\
\eqlabel{1.3.4}
(f\ot M^*)\circ b_M&=&(N\ot f^*)\circ b_N.
\end{eqnarray}
Let $g:\ N\to P$ be another morphism in $\Cc$. It follows from \equref{1.3.4} that
\begin{equation}\eqlabel{1.3.5}
((g\circ f)\ot M^*)\circ b_M=(g\ot f^*)\circ b_N.
\end{equation}
For $f,g:\ M\to M$, we now have
\begin{eqnarray}
&&\hspace*{-15mm}\nonumber
(d_M\ot M^*)\circ((f^*\ot M)\ot g^*)\circ a^{-1}_{M^*,M,M^*}\circ (M^*\ot b_M)\\
&\equal{\equref{1.3.3}}&
(d_M\ot M^*)\circ((M^*\ot f)\ot g^*)\circ a^{-1}_{M^*,M,M^*}\circ (M^*\ot b_M)\nonumber\\
&=&
(d_M\ot M^*)\circ a^{-1}_{M^*,M,M^*}\circ (M^*\ot (f\ot g^*))\circ (M^*\ot b_M)\nonumber\\
&\equal{\equref{1.3.5}}&
(d_M\ot M^*)\circ a^{-1}_{M^*,M,M^*}\circ (M^*\ot ((f\circ g)\ot M^*))\circ (M^*\ot b_M)\nonumber\\
\eqlabel{1.3.6}
&=&
(d_M\ot M^*)\circ((M^*\ot (f\circ g))\ot M^*)\circ a^{-1}_{M^*,M,M^*}\circ (M^*\ot b_M).
\end{eqnarray}

\begin{proposition}\prlabel{1.3}
Suppose that $\Cc$ is a monoidal category with left duality. Then $\widetilde{\Hh}(\Cc)$ also has left
duality. The left dual of $(M,\mu)\in \widetilde{\Hh}(\Cc)$ is $(M^*,(\mu^*)^{-1})$, and the
left and right coevaluation maps are given by the formulas
$$\tilde{d}_M=d_M\circ (\mu^*\ot \mu)~~;~~\tilde{b}_M=(\mu\ot \mu^*)^{-1}\circ b_M.$$
\end{proposition}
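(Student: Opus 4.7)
The plan is to verify the three conditions required for $(M^*,(\mu^*)^{-1})$ together with $\tilde{d}_M$ and $\tilde{b}_M$ to exhibit a left dual of $(M,\mu)$ in $\widetilde{\Hh}(\Cc)$. The first condition, that $(M^*,(\mu^*)^{-1})$ be an object of $\Hh(\Cc)$, is immediate: functoriality of $(-)^*$ gives $(\mu^*)^{-1}=(\mu^{-1})^*$, so $(\mu^*)^{-1}$ is indeed an automorphism of $M^*$.

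The second step is to check that $\tilde{d}_M$ and $\tilde{b}_M$ are morphisms in $\widetilde{\Hh}(\Cc)$, with source $(M^*,(\mu^*)^{-1})\ot (M,\mu)$ and target $(I,I)$ in the case of $\tilde{d}_M$, and with source $(I,I)$ and target $(M,\mu)\ot (M^*,(\mu^*)^{-1})$ in the case of $\tilde{b}_M$. For $\tilde{d}_M$, the required compatibility $\tilde{d}_M\circ((\mu^*)^{-1}\ot \mu)=\tilde{d}_M$ follows by first rewriting, via \equref{1.3.3} applied to $\mu$, the equality $\tilde{d}_M=d_M\circ(\mu^*\ot \mu)=d_M\circ(M^*\ot \mu^2)$, after which the compatibility collapses to a tautology. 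The analogous compatibility for $\tilde{b}_M$ is verified symmetrically using \equref{1.3.4}.

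The third and main step is to verify the two duality axioms \equref{1.3.1} and \equref{1.3.2} in $\widetilde{\Hh}(\Cc)$. For the first, I would expand every tilde using \equref{1.1.3} and \equref{1.1.4}; this introduces the factors $\mu$ and $\mu^{-1}$ from $\tilde{r}_M$ and $\tilde{l}_M^{-1}$, a factor $M\ot(\mu^*\ot \mu)$ from $M\ot \tilde{d}_M$, a factor $(\mu\ot M^*)\ot \mu^{-1}$ from $\tilde{a}_{M,M^*,M}$, and a factor $(\mu\ot \mu^*)^{-1}\ot M$ from $\tilde{b}_M\ot M$. Using naturality of $a$ to commute $M\ot(\mu^*\ot \mu)$ across $a_{M,M^*,M}$ as $(M\ot \mu^*)\ot \mu$, the three inner tensor products of automorphisms collapse to the identity on $(M\ot M^*)\ot M$. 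What remains is the original axiom \equref{1.3.1}, sandwiched between $\mu$ and $\mu^{-1}$, which yields the identity on $M$. The second axiom is handled symmetrically using \equref{1.3.2}.

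The main obstacle is the bookkeeping, rather than any new idea: every tilde abbreviates a composition involving $\mu$, $\mu^{-1}$, $\mu^*$, or $(\mu^*)^{-1}$, and one must carefully track on which tensor factor each acts in order to verify that the accumulated automorphisms cancel. The computation is streamlined by the fact that both forms of $\tilde{a}$ given in \equref{1.1.3} are available, allowing one to choose whichever side of $a$ is more convenient for the naturality manoeuvre in each axiom.
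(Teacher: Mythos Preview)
Your approach is essentially the paper's: verify that $\tilde d_M$ and $\tilde b_M$ are morphisms in $\Hh(\Cc)$, then expand all tildes in each zigzag identity and reduce to \equref{1.3.1} respectively \equref{1.3.2}. One caveat is worth flagging. The second zigzag is not quite ``symmetric'' to the first in the sense you suggest. In the first, as you correctly observe, the accumulated factors $\mu^{\pm 1}$ and $(\mu^*)^{\pm 1}$ on the three tensor slots cancel componentwise to the identity once you push $M\ot(\mu^*\ot\mu)$ across $a_{M,M^*,M}$. In the second, after expanding and using naturality of $a^{-1}$, the residual automorphism is of the form $((\mu^*)^2\ot M)\ot(\mu^*)^{-2}$ (or an equivalent rearrangement): the powers of $\mu^*$ sit on \emph{different} tensor factors and do not cancel directly. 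You must invoke \equref{1.3.3} and \equref{1.3.4} again --- the paper packages this manoeuvre as the derived identity \equref{1.3.6} --- to trade $(\mu^*)^{\pm}$ on one slot for $\mu^{\mp}$ on another via $d_M$ or $b_M$, after which the cancellation goes through and \equref{1.3.2} finishes the job. So ``handled symmetrically using \equref{1.3.2}'' is slightly too brisk; with that extra step made explicit, your argument is complete and matches the paper's.
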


\begin{proof}
Using \equref{1.3.5}, we obtain that
$$(\mu\ot (\mu^*)^{-1})\circ b_M= ((\mu\circ \mu^{-1})\ot M^*)\circ b_M=b_M,$$
so $b_M$ is a morphism in $\Hh(\Cc)$. Then $\tilde{b}_M$ is also a morphism
in $\Hh(\Cc)$.
In a similar way, $\tilde{d}_M$ is a morphism in $\Hh(\Cc)$. Now we compute
\begin{eqnarray*}
&&\hspace*{-1cm}
\tilde{r}_M\circ (M\ot \tilde{d}_M)\circ \tilde{a}_{M,M^*,M}\circ (\tilde{b}_M\ot M)\circ \tilde{l}_M^{-1}
\\
&=&
\mu\circ r_M\circ (M\ot d_M)\circ (M\ot (\mu^*\ot \mu)) \circ (\mu\ot (M^*\ot \mu^{-1}))\\
&&\circ {a}_{M,M^*,M}\circ ((\mu^{-1}\ot (\mu^*)^{-1})\ot M)
\circ (b_M\ot M)\circ l_M^{-1}\circ \mu^{-1}\\
&=&
\mu\circ r_M\circ (M\ot d_M)\circ {a}_{M,M^*,M} 
\circ (b_M\ot M)\circ l_M^{-1}\circ \mu^{-1}\\
&\equal{\equref{1.3.1}}& \mu\circ M\circ \mu^{-1}=M;
\end{eqnarray*}
\begin{eqnarray*}
&&\hspace*{-1cm}
\tilde{l}_{M^*}\circ (\tilde{d}_M\ot M^*)\circ \tilde{a}^{-1}_{M^*,M,M^*}\circ (M^*\ot \tilde{b}_M)\circ 
\tilde{r}_{M^*}^{-1}\\
&=&
(\mu^*)^{-1}\circ {l}_{M^*}\circ ({d}_M\ot M^*)\circ ((\mu^*\ot \mu)\ot M^*)
\circ ((\mu^*\ot M)\ot (\mu^*)^{-1})\\
&& \circ a^{-1}_{M^*,M,M^*}\circ (M^*\ot (\mu^{-1}\ot (\mu^*)^{-1}))
\circ (M^*\ot b_M)\circ r_{M^*}^{-1}\circ \mu^*\\
&=& 
(\mu^*)^{-1}\circ {l}_{M^*}\circ ({d}_M\ot M^*)\circ (((\mu^*)^2\ot M)\ot (\mu^*)^{-2})\\
&& \circ a^{-1}_{M^*,M,M^*}\circ (M^*\ot b_M)\circ r_{M^*}^{-1}\circ \mu^*\\
&\equal{\equref{1.3.6}}&
(\mu^*)^{-1}\circ {l}_{M^*}\circ  ({d}_M\ot M^*)\circ a^{-1}_{M^*,M,M^*}\circ (M^*\ot b_M)\circ r_{M^*}^{-1}\circ \mu^*\\
&\equal{\equref{1.3.2}}&
(\mu^*)^{-1}\circ M^*\circ \mu^*=M^*.
\end{eqnarray*}
\end{proof}

In the same way, we can prove the following result.

\begin{proposition}\prlabel{1.4}
Suppose that $\Cc$ is a monoidal category with right duality. Denote the
right dual of $M\in \Cc$ by ${}^*M$, and the evaluation and coevalution maps
by $d'_M$ and $b'_M$. Then $\Hh(\Cc)$ also has right duality. The right
 dual of $(M,\mu)\in \widetilde{\Hh}(\Cc)$ is $({}^*M,({}^*\mu)^{-1})$, and the
left and right coevaluation maps are given by the formulas
$$\tilde{d}'_M=d'_M\circ ({}^*\mu\ot \mu)^{-1}~~;~~\tilde{b}'_M=(\mu\ot {}^*\mu)\circ b'_M.$$
\end{proposition}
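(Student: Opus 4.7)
The plan is to mirror the proof of \prref{1.3} step by step, replacing the left-duality data $(M^*,b_M,d_M)$ throughout by the right-duality data $({}^*M,b'_M,d'_M)$. In particular, one first writes down the right-duality analogs of \equref{1.3.1}--\equref{1.3.6}: the two rigidity triangles for $b'_M,d'_M$, the defining formula for the dual ${}^*f:{}^*N\to {}^*M$ of a morphism $f:M\to N$, the two compatibility identities $d'_N\circ(f\ot {}^*N)=d'_M\circ(M\ot {}^*f)$ and $({}^*M\ot f)\circ b'_M=({}^*f\ot N)\circ b'_N$, and, as in \equref{1.3.6}, the consequence that for $f,g:M\to M$ one can replace ${}^*f\circ {}^*g$ by ${}^*(g\circ f)$ inside the right-duality zig-zag expression.

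The first concrete step is to show that the proposed unit and counit are morphisms in $\Hh(\Cc)$. For $\tilde b'_M$ this amounts to checking $({}^*\mu^{-1}\ot\mu)\circ\tilde b'_M=\tilde b'_M\circ I$, which after substituting the definition reduces to $({}^*\mu\circ {}^*\mu^{-1}\ot\mu\circ\mu)\circ b'_M$ being equal to $(\mu\ot{}^*\mu)\circ b'_M$ up to the usual argument (the right-dual analog of the computation $(\mu\ot(\mu^*)^{-1})\circ b_M=b_M$ used in \prref{1.3}), and similarly for $\tilde d'_M$.

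The bulk of the work is the verification of the two rigidity conditions in $\widetilde{\Hh}(\Cc)$, namely
\[\tilde l_M\circ(\tilde d'_M\ot M)\circ\tilde a_{M,{}^*M,M}\circ(M\ot\tilde b'_M)\circ\tilde r_M^{-1}=M,\]
\[\tilde r_{{}^*M}\circ({}^*M\ot\tilde d'_M)\circ\tilde a^{-1}_{{}^*M,M,{}^*M}\circ(\tilde b'_M\ot{}^*M)\circ\tilde l_{{}^*M}^{-1}={}^*M.\]
For each, one expands $\tilde a$, $\tilde l$, $\tilde r$ via \equref{1.1.3} and \equref{1.1.4}, then pushes the automorphism factors past $a,l,r$ using naturality until they sit adjacent on the tensor strands. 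Most of them then cancel in pairs $\mu\circ\mu^{-1}$ and ${}^*\mu\circ{}^*\mu^{-1}$; the surviving factors either coalesce, by the right-dual version of \equref{1.3.6}, into $({}^*(\mu^{-1}\circ\mu)\ot\mathrm{id})=(\mathrm{id}\ot\mathrm{id})$, or else cancel against the outer $\mu$ coming from $\tilde l,\tilde r$. After this cleanup one is left exactly with the right-duality rigidity equation in $\Cc$, which finishes the computation.

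The main obstacle is purely bookkeeping: making sure that the $({}^*\mu)^{\pm 1}$ factors introduced by $\tilde b'_M,\tilde d'_M$ and the $\mu^{\pm 1}$ factors coming from $\tilde a,\tilde l,\tilde r$ all commute past the associator and unit constraints in the correct order so that the right-duality analog of \equref{1.3.6} can be invoked; there is no new idea beyond \prref{1.3}, but one must be careful not to confuse $\mu$ with ${}^*\mu$, and to remember that the proposed automorphism on ${}^*M$ is $({}^*\mu)^{-1}$, not ${}^*\mu$, which flips several signs of exponents in the intermediate expressions.
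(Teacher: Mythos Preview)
Your approach is exactly the one the paper intends: the paper's own ``proof'' of \prref{1.4} is the single sentence ``In the same way, we can prove the following result,'' referring back to \prref{1.3}. So mirroring that argument with right-duality data in place of left-duality data is precisely what is asked for.

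One small slip: in your two displayed rigidity identities the associator goes the wrong way. With the standard conventions $b'_M:I\to {}^*M\ot M$ and $d'_M:M\ot {}^*M\to I$, the zig-zags read
\[
\tilde l_M\circ(\tilde d'_M\ot M)\circ\tilde a^{-1}_{M,{}^*M,M}\circ(M\ot\tilde b'_M)\circ\tilde r_M^{-1}=M,
\]
\[
\tilde r_{{}^*M}\circ({}^*M\ot\tilde d'_M)\circ\tilde a_{{}^*M,M,{}^*M}\circ(\tilde b'_M\ot{}^*M)\circ\tilde l_{{}^*M}^{-1}={}^*M,
\]
i.e.\ with $\tilde a^{-1}$ in the first and $\tilde a$ in the second, the opposite of what you wrote. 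This does not affect the strategy at all; the rest of your outline (naturality to slide the $\mu^{\pm1}$ and $({}^*\mu)^{\pm1}$ factors, cancellation, and the right-dual analogue of \equref{1.3.6}) goes through unchanged.
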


Obviously, the Hom-construction is functorial: for a functor $F:\ \Cc\to \Dd$, we have
a functor $\Hh(F):\ \Hh(\Cc)\to \Hh(\Dd)$ given by
$$\Hh(F)(M,\mu)=(F(M),F(\mu))~~;~~\Hh(F)(f)=F(f).$$

\begin{proposition}\prlabel{1.5}
Let $\Cc=(\Cc,\ot,I,a,l,r)$ and $\Dd=(\Dd,\ot,J,b,m,s)$ be monoidal categories, and
that $(F,\varphi_0,\varphi_2)$ is a monoidal functor $\Cc\to \Dd$.
Then $(\Hh(F),\varphi_0,\varphi_2)$ is a monoidal functor $\widetilde{\Hh}(\Cc)\to \widetilde{\Hh}(\Dd)$.
\end{proposition}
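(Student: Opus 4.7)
The plan is to verify the three axioms defining a monoidal functor: (a) the data $\varphi_0, \varphi_2$ still constitute morphisms in $\Hh(\Dd)$, and $\varphi_2$ is still a natural transformation of the appropriate functors $\widetilde{\Hh}(\Cc)\times \widetilde{\Hh}(\Cc) \to \widetilde{\Hh}(\Dd)$; (b) the associativity hexagon holds with $\tilde{a}$ (in $\widetilde{\Hh}(\Cc)$) and $\tilde{b}$ (in $\widetilde{\Hh}(\Dd)$) replacing $a$ and $b$; (c) the unit triangles hold for $\tilde{l},\tilde{r},\tilde{m},\tilde{s}$ in place of $l,r,m,s$.

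For (a), the source and target of $\varphi_2((M,\mu),(N,\nu))$ in $\widetilde{\Hh}(\Dd)$ are $(F(M)\otimes F(N), F(\mu)\otimes F(\nu))$ and $(F(M\otimes N), F(\mu\otimes \nu))$ respectively, so the condition \equref{1.1.1} for $\varphi_2$ to be a morphism in $\Hh(\Dd)$ is precisely the naturality square of $\varphi_2$ applied to the pair $(\mu,\nu)$ in $\Cc$. The map $\varphi_0:(J,\Id_J)\to (F(I),\Id_{F(I)})$ satisfies \equref{1.1.1} trivially. Naturality of $\varphi_2$ over $\widetilde{\Hh}$ follows directly from naturality over the base categories, since morphisms in the Hom-categories are just morphisms in $\Cc$ (resp. $\Dd$) satisfying the extra compatibility \equref{1.1.1}.

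The main content is (b), and this is where the bookkeeping gets delicate. After substituting \equref{1.1.3}, the required hexagon reads $F(a_{M,N,P})\circ F((\mu\otimes N)\otimes \pi^{-1}) \circ \varphi_2(M\otimes N,P) \circ (\varphi_2(M,N)\otimes F(P)) = \varphi_2(M, N\otimes P)\circ (F(M)\otimes \varphi_2(N,P))\circ b_{F(M),F(N),F(P)}\circ ((F(\mu)\otimes F(N))\otimes F(\pi)^{-1})$. Two successive applications of the naturality of $\varphi_2$ — first with respect to the morphisms $\mu\otimes N$ and $\pi^{-1}$ in $\Cc$, then with respect to $\mu$ and $\Id_N$ — move the factor $F((\mu\otimes N)\otimes \pi^{-1})$ on the LHS past $\varphi_2(M\otimes N,P)\circ (\varphi_2(M,N)\otimes F(P))$ and produce the factor $((F(\mu)\otimes F(N))\otimes F(\pi)^{-1})$ on the far right (using $F(\pi^{-1})=F(\pi)^{-1}$, which holds because $\pi$ is an automorphism). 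After cancelling this common factor on both sides, what remains is exactly the original hexagon for $(F,\varphi_0,\varphi_2)$ as a monoidal functor $\Cc\to \Dd$, which holds by assumption. The difficulty is purely notational: one must not conflate $F(\mu\otimes N)$ with $F(\mu)\otimes F(N)$, and keep track of the parenthesizations carefully.

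Point (c) is easier: by \equref{1.1.4} we have $F(\tilde{l}_M)=F(\mu)\circ F(l_M)$ on the $\Cc$-side, while on the $\Dd$-side $\tilde{m}_{F(M)}=F(\mu)\circ m_{F(M)}$. Thus the $\tilde{l}$-triangle for $\Hh(F)$ is obtained from the original $l$-triangle for $F$ by pre-composing with $F(\mu)$ on the left. The right unit triangle is verified in the same way, completing the verification.
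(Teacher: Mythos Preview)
Your proof is correct and follows essentially the same approach as the paper: both arguments expand $\tilde{a}$ and $\tilde{b}$ via \equref{1.1.3} and use the naturality of $\varphi_2$ to shuttle the automorphism factors $F(\mu)$, $F(\pi)^{-1}$ past the $\varphi_2$'s, thereby reducing the new hexagon to the original one for $(F,\varphi_0,\varphi_2)$. Your treatment is in fact slightly more complete, since you also verify the unit triangles (c), which the paper omits; one small terminological slip is that in (c) you mean \emph{post}-composing with $F(\mu)$ (i.e.\ applying $F(\mu)$ on the left of the equality), not pre-composing.
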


\begin{proof}
$\varphi_0:\ J\to F(I)$ is clearly a morphism in $\Hh(\Dd)$. For all $(M,\mu),(N,\nu)\in \Hh(\Cc)$,
$$\varphi_2(M,N):\ F(M)\ot F(N)\to F(M\ot N)$$
is a morphism in $\Hh(\Dd)$, since $\varphi_2$ is natural in $M$ and $N$, so that
$$\varphi_2(M,N)\circ (F(\mu)\ot F(\nu))=F(\mu\ot\nu)\circ \varphi_2(M,N).$$
Let us now show that the following diagram commutes
$$\xymatrix{
(F(M)\ot F(N))\ot F(P)\ar[rr]^{\tilde{b}_{F(M),F(N),F(P)}}\ar[d]_{\varphi_2(M,N)\ot F(P)}&&
F(M)\ot (F(N)\ot F(P))\ar[d]^{F(M)\ot \varphi_2(N,P)}\\
F(M\ot N)\ot F(P)\ar[d]_{\varphi_2(M\ot N,P)}&&
F(M)\ot F(N\ot P)\ar[d]^{\varphi_2(M,N\ot P)}\\
F((M\ot N)\ot P)\ar[rr]^{F(\tilde{a}_{M,N,P})}&&
F(M\ot (N\ot P))}$$
Indeed,
\begin{eqnarray*}
&&\hspace*{-1cm}
\varphi_2(M,N\ot P)\circ (F(M)\ot \varphi_2(N,P))\circ\tilde{b}_{F(M),F(N),F(P)}\\
&=&
\varphi_2(M,N\ot P)\circ (F(M)\ot \varphi_2(N,P))\\
&&\hspace*{1cm}\circ (F(\mu)\ot (F(N)\ot F(\pi^{-1})))
\circ{b}_{F(M),F(N),F(P)}\\
&=&
\varphi_2(M,N\ot P)\circ (F(\mu)\ot F(N\ot \pi^{-1}))\\
&&\hspace*{1cm} \circ (F(M)\ot \varphi_2(N,P))
\circ{b}_{F(M),F(N),F(P)}\\
&=& F(\mu\ot (N\ot \pi^{-1}))\circ \varphi_2(M,N\ot P)\\
&&\hspace*{1cm} \circ (F(M)\ot \varphi_2(N,P))
\circ{b}_{F(M),F(N),F(P)}\\
&=& F(\mu\ot (N\ot \pi^{-1}))\circ F(a_{M,N,P})\circ \varphi_2(M\ot N,P)\circ
\varphi_2(M,N)\circ F(P)\\
&=& F\bigl( (\mu\ot (N\ot \pi^{-1}))\circ a_{M,N,P}\bigr)
\circ \varphi_2(M\ot N,P)\circ
\varphi_2(M,N)\circ F(P)\\
&=& F(\tilde{a}_{M,N,P}) \circ \varphi_2(M\ot N,P)\circ
\varphi_2(M,N)\circ F(P).
\end{eqnarray*}
\end{proof}

\begin{proposition}\prlabel{1.6}
We have an isomorphism of categories
$$F:\ \Hh(\Cc^{\rm op})\to \Hh(\Cc)^{\rm op},~~F(M,\mu)=(M,\mu^{-1}),~F(f)=f.$$
If $\Cc$ is a monoidal, then $F$ defines monoidal isomorphisms
$\Hh(\Cc^{\rm op})\cong \Hh(\Cc)^{\rm op}$ and 
$\widetilde{\Hh}(\Cc^{\rm op})\cong \widetilde{\Hh}(\Cc)^{\rm op}$.
\end{proposition}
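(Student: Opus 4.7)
The plan is to verify that $F$ is a well-defined functor, exhibit an inverse, and then check monoidality for both the $\Hh$ and the $\widetilde{\Hh}$ versions. Throughout, $F$ is the identity on underlying morphisms, so everything reduces to bookkeeping about how three reversals (morphism composition, automorphism inversion, structure-isomorphism direction) interact.

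First I verify well-definedness. Since $\Aut_{\Cc^{\rm op}}(M)=\Aut_\Cc(M)$ as sets (the group laws are opposite, but inversion is the same set-theoretic operation), $F(M,\mu)=(M,\mu^{-1})$ is an unambiguous object of $\Hh(\Cc)$, hence of $\Hh(\Cc)^{\rm op}$. For a morphism $f:(M,\mu)\to(N,\nu)$ in $\Hh(\Cc^{\rm op})$, the compatibility $\nu\circ_{\rm op}f=f\circ_{\rm op}\mu$ translates to $\Cc$ as $f\circ\nu=\mu\circ f$, equivalently $\mu^{-1}\circ f=f\circ\nu^{-1}$, with $f$ now viewed as a $\Cc$-morphism $N\to M$. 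This says $f$ is a morphism $(N,\nu^{-1})\to(M,\mu^{-1})$ in $\Hh(\Cc)$, i.e., $F(f)$ is a morphism $F(M,\mu)\to F(N,\nu)$ in $\Hh(\Cc)^{\rm op}$. Functoriality is automatic, since composition on both sides coincides with composition in $\Cc^{\rm op}$.

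For invertibility I would apply the same construction to $\Cc^{\rm op}$, obtaining a functor $\Hh(\Cc)=\Hh((\Cc^{\rm op})^{\rm op})\to\Hh(\Cc^{\rm op})^{\rm op}$ sending $(M,\mu)$ to $(M,\mu^{-1})$; taking opposites produces $G:\Hh(\Cc)^{\rm op}\to\Hh(\Cc^{\rm op})$, and the composites $GF$, $FG$ are identities because $(\mu^{-1})^{-1}=\mu$.

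For monoidality, tensor and unit are preserved on the nose: $(\mu\otimes\nu)^{-1}=\mu^{-1}\otimes\nu^{-1}$ yields $F((M,\mu)\otimes(N,\nu))=F(M,\mu)\otimes F(N,\nu)$, and $F(I,I)=(I,I)$. In the non-tilde version, the associator of $\Hh(\Cc^{\rm op})$ is $a^{\rm op}_{M,N,P}$ and the associator of $\Hh(\Cc)^{\rm op}$ is likewise $(a_{M,N,P})^{\rm op}$; the unit constraints match the same way. In the tilde version, I would translate \equref{1.1.3} applied in $\Cc^{\rm op}$ back to $\Cc$: the associator of $\widetilde{\Hh}(\Cc^{\rm op})$ at $(M,\mu),(N,\nu),(P,\pi)$ becomes, as a $\Cc$-morphism, $((\mu\otimes N)\otimes\pi^{-1})\circ a^{-1}_{M,N,P}$. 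On the other hand, the associator of $\widetilde{\Hh}(\Cc)^{\rm op}$ at the image objects $(M,\mu^{-1}),(N,\nu^{-1}),(P,\pi^{-1})$ is the $\Cc$-inverse of $\tilde{a}_{M,N,P}$ at those objects, namely $((\mu^{-1}\otimes N)\otimes\pi)^{-1}\circ a^{-1}_{M,N,P}=((\mu\otimes N)\otimes\pi^{-1})\circ a^{-1}_{M,N,P}$, i.e., the same $\Cc$-morphism. The same bookkeeping shows $\tilde{l}^{\rm op}_M$ at $(M,\mu)$ in $\Cc^{\rm op}$ computes to $l^{-1}_M\circ\mu$, matching $(\tilde{l}_M)^{-1}$ at $F(M,\mu)=(M,\mu^{-1})$, and similarly for $\tilde{r}$.

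The only delicate point is keeping the three simultaneous reversals straight; the verification becomes transparent once one observes that \equref{1.1.3} uses $\mu$ and $\pi^{-1}$ asymmetrically, and that $F$ swaps $\mu\leftrightarrow\mu^{-1}$ and $\pi^{-1}\leftrightarrow\pi$, so that the defining formula for $\tilde{a}$ is self-dual under the combined reversal.
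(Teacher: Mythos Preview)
Your proof is correct. The paper states \prref{1.6} without proof, leaving the verification to the reader; your argument supplies exactly the routine bookkeeping one would expect, tracking how the three reversals (composition, automorphism, structure isomorphism) interact, and your key observation that formula \equref{1.1.3} is self-dual under $F$ combined with passage to the opposite is precisely the reason the $\widetilde{\Hh}$ case goes through.
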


Let $\Cc$ be a monoidal category.
Our next aim is to show that the categories $\Hh(\Cc)=
(\Hh(\Cc),\ot, (I,I), a,l,r)$ and $\widetilde{\Hh}(\Cc)=
(\Hh(\Cc),\ot, (I,I), \tilde{a},\tilde{l},\tilde{r})$ are tensor isomorphic.
Let $F:\ \Hh(\Cc)\to \Hh(\Cc)$ be the identity functor, and
$\varphi_0:\ I\to I$ the identity. For $M,N\in \Hh(\Cc)$, we define
$$\varphi_2(M,N)=\mu\ot\nu:\ F(M)\ot F(N)=F(M\ot N)\to F(M\ot N)=M\ot N.$$
$\varphi_2$ is natural in $M$ and $N$: if $f:\ M\to M'$ and $g:\ N\to N'$
are morphisms in $\Hh(\Cc)$, then we have the commutative diagram
$$\xymatrix{
M\ot N\ar[rr]^{\mu\ot \nu}\ar[d]_{f\ot g}&&M\ot N\ar[d]^{f\ot g}\\
M'\ot N'\ar[rr]^{\mu'\ot \nu'}&& M'\ot N'}$$

\begin{proposition}\prlabel{1.7}
Let $\Cc$ be a monoidal category. Then the functor $(F,\varphi_0,\varphi_2):
\Hh(\Cc)\to \tilde{\Hh}(\Cc)$
defined above is strong monoidal. Consequently, the monoidal categories
$\Hh(\Cc)$ and $\tilde{\Hh}(\Cc)$ are tensor isomorphic.
\end{proposition}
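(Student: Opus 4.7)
The plan is to verify the three axioms of a strong monoidal functor for $(F,\varphi_0,\varphi_2)$, and then observe that tensor isomorphism of the two categories is immediate because $F$ is the identity functor on $\Hh(\Cc)$.

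First I would check that $\varphi_0 = \Id_I$ and $\varphi_2(M,N)=\mu\ot\nu$ are isomorphisms in $\Hh(\Cc)$. The map $\varphi_0$ is trivially so; for $\varphi_2(M,N)$, one observes that $\mu,\nu$ are automorphisms in $\Cc$, so $\mu\ot\nu$ has inverse $\mu^{-1}\ot\nu^{-1}$, and it is indeed a morphism in $\Hh(\Cc)$ since it commutes with the structure automorphism $\mu\ot\nu$ on $M\ot N$. Naturality of $\varphi_2$ in $M$ and $N$ is already recorded in the commutative square displayed just before the statement, and comes directly from bifunctoriality of $\ot$ together with the intertwining relation \equref{1.1.1} for morphisms of $\Hh(\Cc)$.

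Next I would verify the hexagon (associativity) coherence. Since $F$ is the identity, this amounts to checking that
\[
\tilde{a}_{M,N,P}\circ \varphi_2(M,N)\ot P\text{-preimage...}
\]
more precisely, that $a_{M,N,P}\circ \varphi_2(M\ot N,P)\circ(\varphi_2(M,N)\ot P)$ equals $\varphi_2(M,N\ot P)\circ(M\ot \varphi_2(N,P))\circ \tilde{a}_{M,N,P}$. Substituting $\varphi_2$ and combining tensor products, the left-hand side reduces to $a_{M,N,P}\circ((\mu^2\ot\nu^2)\ot\pi)$. For the right-hand side, I would use the formula $\tilde{a}_{M,N,P}=(\mu\ot(N\ot\pi^{-1}))\circ a_{M,N,P}$ from \equref{1.1.3}; composing with $(\mu\ot(\nu\ot\pi))\circ(M\ot(\nu\ot\pi))=\mu\ot(\nu^2\ot\pi^2)$ yields $(\mu^2\ot(\nu^2\ot\pi))\circ a_{M,N,P}$, which equals the left-hand side by naturality of $a$.

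Then I would verify the two unit coherences. The left one requires $F(l_M)\circ\varphi_2(I,M)\circ(\varphi_0\ot F(M))=\tilde{l}_M$, i.e.\ $l_M\circ(I\ot\mu)=\tilde{l}_M$, which is exactly the second equality in \equref{1.1.4}. The right one is symmetric, using the second equality in \equref{1.1.4} for $\tilde{r}_M$. I expect no obstacle here; the nontrivial calculation is the associativity coherence above, and even that is short once one observes that the $\mu\ot\nu$ factors ``absorb'' the $\mu\ot\pi^{-1}$ part of $\tilde a$.

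Finally, since $F$ is literally the identity functor on objects and morphisms, it is trivially an isomorphism of underlying categories; combined with the strong monoidal data $(\varphi_0,\varphi_2)$ just verified, this exhibits a monoidal isomorphism $\Hh(\Cc)\to\widetilde{\Hh}(\Cc)$, and the two monoidal categories are therefore tensor isomorphic.
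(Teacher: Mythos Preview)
Your proposal is correct and follows essentially the same route as the paper: you verify the associativity coherence by reducing both sides to $(\mu^2\ot(\nu^2\ot\pi))\circ a_{M,N,P}$ via naturality of $a$, and you handle the unit coherences directly from \equref{1.1.4}, exactly as the paper does. The only addition is your preliminary remark that $\varphi_0,\varphi_2$ are invertible morphisms in $\Hh(\Cc)$, which the paper leaves implicit but is of course needed for the ``strongly'' part.
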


\begin{proof}
We have to show first that the diagram
$$\xymatrix{
(M\ot N)\ot P\ar[rr]^{\tilde{a}_{M,N,P}}\ar[d]_{\varphi_2(M,N)\ot P}&&
M\ot (N\ot P)\ar[d]^{M\ot \varphi_2(N,P)}\\
(M\ot N)\ot P \ar[d]_{\varphi_2(M\ot N,P)}&&
M\ot (N\ot P)\ar[d]^{\varphi_2(M,N\ot P)}\\
(M\ot N)\ot P\ar[rr]^{{a}_{M,N,P}}&& M\ot (N\ot P)}$$
commutes, for all $(M,\mu),(N,\nu),(P,\pi)\in \Hh(\Cc)$. Indeed,
\begin{eqnarray*}
&&\hspace*{-2cm}
\varphi_2(M,N\ot P)\circ (M\ot \varphi_2(N,P))\circ \tilde{a}_{M,N,P}\\
&=&
(\mu\ot(\nu\ot \pi))\circ (M\ot (\nu\ot \pi))\circ (\mu\ot (N\ot \pi^{-1}))\circ {a}_{M,N,P}\\
&=& (\mu^2\ot (\nu^2\ot \pi))\circ {a}_{M,N,P}\\
&=& {a}_{M,N,P} \circ ((\mu^2\ot \nu^2)\ot \pi)\\
&=&  {a}_{M,N,P} \circ ((\mu\ot \nu)\ot \pi \circ ((\mu\ot \nu)\ot P)\\
&=&  {a}_{M,N,P} \circ \varphi_2(M\ot N,P)\circ (\varphi_2(M,N)\ot P).
\end{eqnarray*}
Finally we need to show that the following two diagrams commute:
$$\xymatrix{
I\ot M\ar[rr]^{\tilde{l}_M}\ar[d]_{\varphi_0\ot M}&&M\\
I\ot M\ar[rr]^{\varphi_2(I,M)}&&I\ot M\ar[u]_{l_M}}~~~~
\xymatrix{
M\ot I\ar[rr]^{\tilde{r}_M}\ar[d]_{M\ot \varphi_0}&&M\\
M\ot I\ar[rr]^{\varphi_2(M,I)}&&M\ot I\ar[u]_{r_M}}$$
Indeed,
$$l_M\circ \varphi_2(I,M)\circ \varphi_0\ot M=l_M\ot (I\ot \mu)=\tilde{l}_M.$$
The commutativity of the second diagram is proved in a similar way.
\end{proof}

\begin{remark}\relabel{Homadditivity}
Let $\Cc$ be an additive category. It is easy to see that both ${\Hh}(\Cc)$ and $\widetilde{\Hh}(\Cc)$ are additive as well.
\end{remark}

\prref{1.8} allows us to characterize algebras and coalgebras in the category $\widetilde{\Hh}(\Cc)$. These descriptions will be very useful in the following Section.
\\Since the functor $(F,\varphi_0,\varphi_2)$ from \prref{1.7} is a strong monoidal
isomorphism of monoidal categories, we obtain a category isomorphism between
the category of algebras in $\Hh(\Cc)$ and the category of algebras in $\widetilde{\Hh}(\Cc)$.
Algebras in $\Hh(\Cc)$ are easy to describe: they are of the form $(A,\alpha)$,
where $A$ is an algebra in $\Cc$ and $\alpha:\ A\to A$ is an algebra automorphism.

\begin{corollary}\colabel{1.9}
An algebra $\tilde{A}$ in $\widetilde{\Hh}(\Cc)$ is of the following type:
\\$\tilde{A}=(A,\alpha,m_A\circ (\alpha\ot \alpha)=\alpha\circ m_A,\eta_A)$,
where $A$ is an algebra in $\Cc$ with multiplication $m_A$ and unit $\eta_A$,
and $\alpha$ is an algebra automorphism of $A$.
\end{corollary}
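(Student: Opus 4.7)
The plan is to apply \prref{1.8} to the strongly monoidal isomorphism $(F,\varphi_0,\varphi_2):\ \Hh(\Cc)\to\widetilde{\Hh}(\Cc)$ supplied by \prref{1.7}, after first giving an explicit description of algebras in the ``easy'' category $\Hh(\Cc)$.

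First, I would observe that because $\Hh(\Cc)$ carries the \emph{original} associativity and unit constraints of $\Cc$, an algebra in $\Hh(\Cc)$ is nothing but an object $(A,\alpha)$ equipped with a multiplication $m_A:\ A\ot A\to A$ and unit $\eta_A:\ I\to A$ in $\Cc$ satisfying the usual associativity and unit axioms, with the extra condition that $m_A$ and $\eta_A$ are morphisms in $\Hh(\Cc)$. The morphism condition for $m_A:\ (A,\alpha)\ot(A,\alpha)\to(A,\alpha)$ reads
\[
\alpha\circ m_A=m_A\circ(\alpha\ot\alpha),
\]
i.e.\ $\alpha$ is multiplicative, and the morphism condition for $\eta_A:\ (I,I)\to(A,\alpha)$ reads $\alpha\circ\eta_A=\eta_A$, i.e.\ $\alpha$ preserves the unit. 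Since $\alpha$ is invertible by the very definition of $\Hh(\Cc)$, this says precisely that $A$ is an algebra in $\Cc$ and $\alpha$ is an algebra automorphism of $A$.

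Next I would transport this description along $F$. By \prref{1.7}, $(F,\varphi_0,\varphi_2)$ is a strongly monoidal isomorphism, so it induces an isomorphism between the categories of algebras on each side. Applying \prref{1.8} to an algebra $(A,\alpha,m_A,\eta_A)$ in $\Hh(\Cc)$, the corresponding algebra $\tilde A=F(A,\alpha)=(A,\alpha)$ in $\widetilde{\Hh}(\Cc)$ has
\[
m_{\tilde A}=F(m_A)\circ\varphi_2(A,A)=m_A\circ(\alpha\ot\alpha),\qquad
\eta_{\tilde A}=F(\eta_A)\circ\varphi_0=\eta_A,
\]
and the identity $m_A\circ(\alpha\ot\alpha)=\alpha\circ m_A$ (which holds because $\alpha$ is an algebra automorphism) gives the stated form $m_{\tilde A}=\alpha\circ m_A$. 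Conversely, every algebra in $\widetilde{\Hh}(\Cc)$ arises in this way because $F$ is an isomorphism of categories.

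There is no real obstacle; the only thing to be careful about is the bookkeeping between the two descriptions of $m_{\tilde A}$, namely as $m_A\circ(\alpha\ot\alpha)$ (which is what \prref{1.8} produces directly) and as $\alpha\circ m_A$ (which is how it is stated in the corollary), together with the verification that the morphism conditions in $\Hh(\Cc)$ for $m_A$ and $\eta_A$ really do force $\alpha$ to be an algebra automorphism rather than just an isomorphism of the underlying object.
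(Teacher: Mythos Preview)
Your proposal is correct and follows precisely the paper's approach: the paper also observes that algebras in $\Hh(\Cc)$ are pairs $(A,\alpha)$ with $A$ an algebra in $\Cc$ and $\alpha$ an algebra automorphism, and then transports this description to $\widetilde{\Hh}(\Cc)$ via the strongly monoidal isomorphism of \prref{1.7} together with \prref{1.8}. Your write-up is in fact slightly more detailed than the paper's, which states the corollary without a separate proof and relies on the explanatory paragraph preceding it.
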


The functor $(F,\varphi_0,\varphi_2)$ from \prref{1.7} is a strong monoidal one, hence
$(F,\varphi^{-1}_0,\varphi^{-1}_2)$ is strong comonoidal. Now we have
a category isomorphism between
the category of coalgebras in $\Hh(\Cc)$ and the category of coalgebras in $\widetilde{\Hh}(\Cc)$.
Coalgebras in $\Hh(\Cc)$ are also easy to describe, and we obtain the following structure
theorem for coalgebras in $\widetilde{\Hh}(\Cc)$.

\begin{corollary}\colabel{1.10}
A coalgebra $\tilde{C}$ in $\widetilde{\Hh}(\Cc)$ is of the form
$\tilde{C}=(C,\gamma,(\gamma^{-1}\ot \gamma^{-1})\circ\Delta_C=\Delta_C\circ\gamma^{-1},
\varepsilon_C)$, where $(C,\Delta_C,\varepsilon_C)$ is a coalgebra in $\Cc$ and
$\gamma:\ C\to C$ is a coalgebra automorphism.
\end{corollary}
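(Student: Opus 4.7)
The plan is to mirror the derivation of Corollary 1.9 on the coalgebra side, exploiting that the strongly monoidal isomorphism $(F,\varphi_0,\varphi_2):\Hh(\Cc)\to\widetilde{\Hh}(\Cc)$ of Proposition 1.7 gives, upon inversion of its structure morphisms, a strongly comonoidal isomorphism $(F,\varphi_0^{-1},\varphi_2^{-1})$. By the comonoidal half of Proposition 1.8, this induces an isomorphism between the category of coalgebras in $\Hh(\Cc)$ and the category of coalgebras in $\widetilde{\Hh}(\Cc)$, so it suffices to describe coalgebras in $\Hh(\Cc)$ and then transport.

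First I would classify coalgebras in $\Hh(\Cc)$. Since the associativity and unit constraints on $\Hh(\Cc)$ are inherited from $\Cc$, such a coalgebra is an object $(C,\gamma)\in\Hh(\Cc)$ together with maps $\Delta_C$ and $\varepsilon_C$ giving $C$ a coalgebra structure in $\Cc$, such that $\Delta_C:(C,\gamma)\to(C\ot C,\gamma\ot\gamma)$ and $\varepsilon_C:(C,\gamma)\to(I,I)$ are morphisms in $\Hh(\Cc)$. These last two conditions read $(\gamma\ot\gamma)\circ\Delta_C=\Delta_C\circ\gamma$ and $\varepsilon_C\circ\gamma=\varepsilon_C$, which together with the invertibility of $\gamma$ say exactly that $\gamma$ is a coalgebra automorphism of $C$.

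Next I would transport $(C,\gamma,\Delta_C,\varepsilon_C)$ to $\widetilde{\Hh}(\Cc)$ along the comonoidal functor $(F,\varphi_0^{-1},\varphi_2^{-1})$. Using $\varphi_2(C,C)=\gamma\ot\gamma$ and $\varphi_0=\Id_I$, Proposition 1.8 gives the new comultiplication and counit
\[
\tilde{\Delta}=\varphi_2^{-1}(C,C)\circ F(\Delta_C)=(\gamma^{-1}\ot\gamma^{-1})\circ\Delta_C,\qquad \tilde{\varepsilon}=\varphi_0^{-1}\circ F(\varepsilon_C)=\varepsilon_C.
\]
The remaining equality $(\gamma^{-1}\ot\gamma^{-1})\circ\Delta_C=\Delta_C\circ\gamma^{-1}$ in the statement then drops out by composing the identity $(\gamma\ot\gamma)\circ\Delta_C=\Delta_C\circ\gamma$ with $\gamma^{-1}\ot\gamma^{-1}$ on the left and $\gamma^{-1}$ on the right.

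No step is genuinely obstructive: the work is bookkeeping through Propositions 1.7 and 1.8. The only care needed is to apply Proposition 1.8 in its comonoidal form, i.e., with the inverted structure morphisms, so that $\varphi_2^{-1}(C,C)$ appears on the left of $F(\Delta_C)$ rather than $\varphi_2(C,C)$ on the right; getting this direction wrong would produce $(\gamma\ot\gamma)\circ\Delta_C$ instead of $(\gamma^{-1}\ot\gamma^{-1})\circ\Delta_C$ and miss the content of the corollary.
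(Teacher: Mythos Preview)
Your proposal is correct and follows essentially the same approach as the paper: the paragraph preceding \coref{1.10} argues exactly that $(F,\varphi_0^{-1},\varphi_2^{-1})$ is strongly comonoidal, that this yields an isomorphism between coalgebras in $\Hh(\Cc)$ and in $\widetilde{\Hh}(\Cc)$, and that coalgebras in $\Hh(\Cc)$ are easy to describe. You have simply made the transport along \prref{1.8} explicit, which the paper leaves to the reader.
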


Corollaries \ref{co:1.9} and  \ref{co:1.10} are related to \cite[Theorem 2.4]{Yau2} and \cite[Theorem 3.16]{MS3}.
We will come back to this in \seref{2}, see the comments following \coref{2.4.3}.
 
We now also take braidings into account.

\begin{proposition}\prlabel{1.13}
Let $\Cc$ be a braided monoidal category. Then the monoidal functor $(F,\varphi_0,\varphi_2)$
from \prref{1.7} is braided. Consequently $\Hh(\Cc)$ and $\widetilde{\Hh}(\Cc)$ are
isomorphic as braided monoidal categories.
\end{proposition}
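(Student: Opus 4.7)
The plan is to unpack what ``braided'' means for the identity-on-underlying-data functor $(F,\varphi_0,\varphi_2)$ of \prref{1.7} and then observe that the required coherence diagram reduces to naturality of the braiding $c$ in $\Cc$. Since $F$ is already shown to be strongly monoidal in \prref{1.7}, and since both $\Hh(\Cc)$ and $\widetilde{\Hh}(\Cc)$ carry $c$ as a braiding by \prref{1.2}, the ``consequently'' clause will follow at once from the braided version of \prref{1.8}/\prref{1.12}, so the whole content lies in verifying the braiding diagram.

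Concretely, I would write out the diagram that must commute for all $(M,\mu),(N,\nu)\in \Hh(\Cc)$:
$$\xymatrix{
F(M)\ot F(N)\ar[rr]^{c_{M,N}}\ar[d]_{\varphi_2(M,N)=\mu\ot\nu}&&
F(N)\ot F(M)\ar[d]^{\varphi_2(N,M)=\nu\ot\mu}\\
F(M\ot N)\ar[rr]^{F(c_{M,N})=c_{M,N}}&&F(N\ot M)}$$
Here I am using that $F$ is the identity on objects and morphisms, that the braiding of $\widetilde{\Hh}(\Cc)$ coincides with $c$ (source), and the braiding of $\Hh(\Cc)$ coincides with $c$ (target). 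Commutativity of this square is then exactly the equality
$$c_{M,N}\circ(\mu\ot\nu)=(\nu\ot\mu)\circ c_{M,N},$$
which is precisely the naturality of $c$ applied to the pair of morphisms $\mu:M\to M$ and $\nu:N\to N$ in $\Cc$.

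There is no real obstacle here: the only point to be careful about is keeping the source and target braidings straight, since they are both denoted $c$. Once this is done, the functor $(F,\varphi_0,\varphi_2)$ is a braided strongly monoidal functor, and since $F$ is in fact an isomorphism of underlying categories with $\varphi_0,\varphi_2$ invertible, it is an isomorphism of braided monoidal categories, giving the second assertion.
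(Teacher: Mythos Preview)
Your proof is correct and is precisely the straightforward verification the paper has in mind (the paper gives no details beyond calling the result ``straightforward''): the braided-functor square reduces to naturality of $c$ with respect to $\mu$ and $\nu$. One tiny slip in your exposition: since $F:\Hh(\Cc)\to\widetilde{\Hh}(\Cc)$, the source category is $\Hh(\Cc)$ and the target is $\widetilde{\Hh}(\Cc)$, so your parenthetical labels ``(source)'' and ``(target)'' are interchanged; this is harmless because both braidings equal $c$.
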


Using \prref{1.8} and \prref{1.12},
we now immediately obtain the following structure theorems for bialgebras and Hopf
algebras in $\widetilde{\Hh}(\Cc)$.

\begin{proposition}\prlabel{1.14}
A bialgebra $\tilde{H}$ in $\widetilde{\Hh}(\Cc)$ is of the following type:
$\tilde{H}=(H,\alpha, m_H\circ (\alpha\ot\alpha),\eta_A, \Delta_H\circ\alpha^{-1},\varepsilon_H)$,
where $(H,m_H,\eta_H,\Delta_H,\varepsilon_H)$ is a bialgebra in $\Cc$, and
$\alpha\in \Aut_\Cc(H)$ is a bialgebra automorphism. $\tilde{H}$ is a Hopf algebra
in $\widetilde{\Hh}(\Cc)$ if $H$ is a Hopf algebra in $\Cc$.
\end{proposition}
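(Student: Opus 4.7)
The plan is to reduce the statement to the easy analogue in $\Hh(\Cc)$ and then transport it along the strongly monoidal braided isomorphism $(F,\varphi_0,\varphi_2):\ \Hh(\Cc)\to\widetilde{\Hh}(\Cc)$ supplied by Propositions \ref{pr:1.7} and \ref{pr:1.13}. Since this functor is strongly monoidal and braided, applying Propositions \ref{pr:1.8} and \ref{pr:1.12} to $F$ and to its inverse gives bijections between bialgebras (resp. Hopf algebras) in $\widetilde{\Hh}(\Cc)$ and bialgebras (resp. Hopf algebras) in $\Hh(\Cc)$.

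First I would identify bialgebras in $\Hh(\Cc)$. Because the tensor product, associativity constraint, unit constraints, and braiding on $\Hh(\Cc)$ are inherited unchanged from $\Cc$, Corollaries \ref{co:1.9} and \ref{co:1.10} already describe algebras and coalgebras in $\Hh(\Cc)$ as pairs $(H,\alpha)$ where $H$ carries the corresponding structure in $\Cc$ and $\alpha$ is an algebra (resp. coalgebra) automorphism. Combining the two, a bialgebra in $\Hh(\Cc)$ is precisely a pair $(H,\alpha)$ with $H$ a bialgebra in $\Cc$ and $\alpha$ a bialgebra automorphism of $H$.

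Next I would apply Proposition \ref{pr:1.8} to the strongly (co)monoidal functor $F$ of Proposition \ref{pr:1.7}. Since $\varphi_0=\Id_I$ and $\varphi_2(H,H)=\alpha\ot\alpha$, the transported multiplication is
$$m_{\tilde H}=F(m_H)\circ\varphi_2(H,H)=m_H\circ(\alpha\ot\alpha),$$
the transported unit is $\eta_H$, and, dually, using $\varphi_2^{-1}(H,H)=\alpha^{-1}\ot\alpha^{-1}$,
$$\Delta_{\tilde H}=\varphi_2^{-1}(H,H)\circ F(\Delta_H)=(\alpha^{-1}\ot\alpha^{-1})\circ\Delta_H=\Delta_H\circ\alpha^{-1},$$
where the last equality uses that $\alpha$ is a coalgebra morphism. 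The counit is $\varepsilon_H$. This reproduces the six-tuple stated in the proposition.

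For the Hopf algebra assertion I would invoke Proposition \ref{pr:1.12}. If $H$ is a Hopf algebra in $\Cc$ with antipode $S$, then $S$ is automatically a morphism in $\Hh(\Cc)$: the morphism $\alpha\circ S\circ\alpha^{-1}$ is again an antipode of $H$ (because $\alpha$ is a bialgebra automorphism), so by uniqueness of antipodes $\alpha\circ S\circ\alpha^{-1}=S$, i.e.\ $\alpha\circ S=S\circ\alpha$. Hence $(H,\alpha)$ is a Hopf algebra in $\Hh(\Cc)$, and Proposition \ref{pr:1.12} promotes $\tilde H=F(H)$ to a Hopf algebra in $\widetilde{\Hh}(\Cc)$ with antipode $F(S)=S$. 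I do not foresee any real obstacle: the argument is a compact chain of applications of the monoidal-transport machinery built in \seref{1}, and the only mildly delicate step is the antipode-commutation argument just described.
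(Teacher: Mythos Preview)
Your proposal is correct and follows essentially the same approach as the paper, which presents \prref{1.14} as an immediate consequence of Propositions~\ref{pr:1.7}, \ref{pr:1.8}, \ref{pr:1.12}, and \ref{pr:1.13}; you are simply making the transport along $F$ explicit, and your antipode argument is exactly what the paper's remark after \prref{1.14} alludes to. One minor slip: Corollaries~\ref{co:1.9} and \ref{co:1.10} describe algebras and coalgebras in $\widetilde{\Hh}(\Cc)$, not in $\Hh(\Cc)$---the easy description of (co)algebras in $\Hh(\Cc)$ is in the prose just before those corollaries---but your logic is unaffected.
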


Remark that we do not have to impose that the antipode $S$ of $H$ commutes with
$\alpha$ (equivalently, $\alpha$ is a Hopf algebra isomorphism): this follows automatically
from the fact that $\alpha$ is a bialgebra isomorphism.

\section{Monoidal Hom-Hopf algebras}\selabel{2}
Let $k$ be a commutative ring.
The results of \seref{1} can be applied to the category of $k$-modules
(vector spaces if $k$ is a field);
$\Cc=\Mm_k$. 
\\Let $(M,\mu),(N,\nu),(P,\pi)\in\tilde{\Hh}(\Mm_k)$. The associativity
and unit constraints are given by the formulas
$$\tilde{a}_{M,N,P}((m\ot n)\ot p)=\mu(m)\ot (n\ot \pi^{-1}(p));$$
$$\tilde{l}_M(x\ot m)=\tilde{r}_M(m\ot x)=x\mu(m).$$
An algebra in $\widetilde{\Hh}(\Mm_k)$ will be called a \textit{monoidal Hom-algebra}\index{algebra!monoidal Hom-}.

\begin{proposition}\prlabel{2.1}
A monoidal Hom-algebra is an object $(A,\alpha)\in \widetilde{\Hh}(\Mm_k)$ together with a $k$-linear map
$m_A:\ A\ot A\to A$ and an element $1_A\in A$ such that\index{unitality!Hom-}\index{associativity!Hom-}
\begin{equation}\eqlabel{2.1.1}
\alpha(ab)=\alpha(a)\alpha(b)~~;~~\alpha(1_A)=1_A,
\end{equation}
\begin{equation}\eqlabel{2.1.2}
\alpha(a)(bc)=(ab)\alpha(c)~~;~~a1_A=1_Aa=\alpha(a),
\end{equation}
for all $a,b,c\in A$. Here we use the notation $m_A(a\ot b)=
ab$.
\end{proposition}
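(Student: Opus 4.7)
The plan is to unpack the definition of an algebra in the monoidal category $\widetilde{\Hh}(\Mm_k)$ and translate each categorical axiom into a pointwise identity in $A$. By definition, such an algebra consists of an object $(A,\alpha)\in\widetilde{\Hh}(\Mm_k)$ together with morphisms $m_A\colon (A,\alpha)\ot(A,\alpha)\to(A,\alpha)$ (encoding the multiplication $ab=m_A(a\ot b)$) and $\eta_A\colon(k,\Id)\to(A,\alpha)$ (encoding the unit $1_A=\eta_A(1_k)$), satisfying the associativity and unit axioms written with respect to the constraints $\tilde{a},\tilde{l},\tilde{r}$ of \prref{1.1}.

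First I would extract the multiplicativity conditions \equref{2.1.1} from the requirement that $m_A$ and $\eta_A$ are morphisms in $\Hh(\Mm_k)$. Applying \equref{1.1.1} to $m_A$ with source $(A\ot A,\alpha\ot\alpha)$ and target $(A,\alpha)$ gives $\alpha\circ m_A=m_A\circ(\alpha\ot\alpha)$, i.e.\ $\alpha(ab)=\alpha(a)\alpha(b)$; applying it to $\eta_A$ with source $(k,\Id)$ gives $\alpha(1_A)=1_A$.

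Next I would translate the associativity pentagon $m_A\circ(m_A\ot A)=m_A\circ(A\ot m_A)\circ\tilde{a}_{A,A,A}$. Using the explicit formula \equref{1.1.3}, we have
\[
\tilde{a}_{A,A,A}\bigl((a\ot b)\ot c\bigr)=\alpha(a)\ot\bigl(b\ot\alpha^{-1}(c)\bigr),
\]
so associativity becomes $(ab)c=\alpha(a)(b\alpha^{-1}(c))$, which is precisely \equref{as2}. Substituting $\alpha(c)$ for $c$ and using the bijectivity of $\alpha$, this is equivalent to $\alpha(a)(bc)=(ab)\alpha(c)$, giving the first half of \equref{2.1.2}. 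The unit axioms $m_A\circ(\eta_A\ot A)=\tilde{l}_A$ and $m_A\circ(A\ot\eta_A)=\tilde{r}_A$, combined with the formulas \equref{1.1.4} $\tilde{l}_A(x\ot a)=\tilde{r}_A(a\ot x)=x\alpha(a)$, yield $1_Aa=a1_A=\alpha(a)$, completing \equref{2.1.2}.

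The converse direction is obtained by reading the chain of equivalences backwards: any data $(A,\alpha,m_A,1_A)$ satisfying \equref{2.1.1}--\equref{2.1.2} defines morphisms $m_A$ and $\eta_A$ in $\widetilde{\Hh}(\Mm_k)$ which obey the associativity pentagon and unit triangles with respect to $\tilde{a},\tilde{l},\tilde{r}$. No substantial obstacle arises; the only delicate point is the substitution $c\mapsto\alpha(c)$ that shows the categorical associativity \equref{as2} (naturally involving $\alpha^{-1}$ because of \equref{1.1.3}) is equivalent to the Hom-associativity \equref{as1} familiar from the literature — this equivalence is exactly where the hypothesis that $\alpha$ is invertible is used.
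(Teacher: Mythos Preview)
Your proposal is correct and follows essentially the same approach as the paper's own proof: both unpack the definition of an algebra in $\widetilde{\Hh}(\Mm_k)$, read off \equref{2.1.1} from the fact that $m_A$ and $\eta_A$ are morphisms in the Hom-category, translate the associativity diagram via \equref{1.1.3} into $(ab)c=\alpha(a)(b\alpha^{-1}(c))$ and then into the first half of \equref{2.1.2}, and obtain the unit condition from the constraints \equref{1.1.4}. Your write-up is in fact slightly more explicit than the paper's (you spell out the substitution $c\mapsto\alpha(c)$ and the converse direction), but nothing is genuinely different.
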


\begin{proof}
Let $(A,\alpha,m_A,\eta)$ be an algebra in $\widetilde{\Hh}(\Mm_k)$. Let $\eta(1)=1_A$.
The fact that the structure maps $m_A$ and $\eta_A$ are maps in $\widetilde{\Hh}(\Mm_k)$
is expressed by \equref{2.1.1}
The associativity of the multiplication map $m$ is expressed by the commutativity of the
following diagram:
$$\xymatrix{
(A\ot A)\ot A \ar[rr]^(.6){m_A\ot A}\ar[dd]_{\tilde{a}_{A,A,A}}&&A\ot A\ar[rd]^{m_A}\\
&&&A\\
A\ot(A\ot A) \ar[rr]^(.6){A\ot m_A}&&A\ot A\ar[ru]^{m_A}}$$
or, for all $a,b,c\in A$:
$$(ab)c=\alpha(a)(b\alpha^{-1}(c)),$$
which is equivalent to the first formula of \equref{2.1.2}. 
The unit condition is expressed by the commutativity of the diagram
$$\xymatrix{
k\ot A\ar[rr]^{\eta\ot A}\ar[rrd]_{l_A}&&A\ot A\ar[d]^{m_A}&&
A\ot k\ar[ll]_{A\ot \eta}\ar[lld]^{r_A}\\
&&A&&}$$
which is equivalent to the second formula of \equref{2.1.2}.
\end{proof}

\begin{remark}\relabel{2.2}
The Hom-associativity condition in \equref{2.1.2} first appeared in 
\cite[Def. 1.1]{MS}. In \cite{MS}, a Hom-(associative) algebra\index{algebra!Hom-} is a triple 
$(A;\mu_A; \alpha)$, with a linear space $A$, 
a linear space homomorphism (linear map) 
$\alpha:A \rightarrow A$ and a bilinear multiplication $\mu_A : A \otimes A \rightarrow A$ 
satisfying the Hom-associativity condition in the first equation in \equref{2.1.2}.
The authors of \cite{MS} do not impose that $\alpha$ is injective, nor that $\mu_A$ 
satisfies the first equation of \equref{2.1.1}, nor that the algebra has a unit in any sense. 
In \cite{MS} and \cite{Fregier2}, the authors consider also the subclass of unital
Hom-associative algebras. For the unital Hom-associative algebras they require the existence 
of $1_A \in A$ such that
$a1_A = 1_A a = a$, for all $a \in A$, a condition that is clearly different from the
second equation in \equref{2.1.2}. It should be also mentioned here that the 
unitality condition in \equref{2.1.2} has been introduced and studied 
in detail in \cite{Fregier2} where it is called weak unitality (see \cite[Def. 2.1]{Fregier2}).\\
Now let $(A,*,\alpha)$ be a Hom-associative structure in the classical sense of
\cite{MS}: $\alpha:\ A\to A$ is $k$-linear, and $*$ is a multiplication on $A$ satisfying the
first equality in \equref{2.1.2}. Gohr \cite{Gohr} calls $A$ a twist if there is an associative
multiplication $\cdot$ on $A$ such that $a*b=\alpha(a\cdot b)$, for all $a,b\in A$.
Sufficient conditions for $A$ to be a twist are discussed in great detail in
\cite{Fregier,Fregier2,FGS,Gohr}. For example, the twist property can be characterized
completely in the situation where $\alpha$ is surjective, see \cite[Prop. 2]{Gohr}.
In our situation, the condition that $\alpha$ is bijective is essential, see the definition
of the monoidal categories $\Hh(\Cc)$ and $\tilde{\Hh}(\Cc)$ in \seref{1}. Then
\prref{2.1} tells us that $A$ is a twist, and this is essentially a consequence of the
monoidal equivalence discussed in \prref{1.7}.
\end{remark}

From \coref{1.9}, we immediately obtain the following result. 

\begin{proposition}\prlabel{2.3}
A monoidal Hom-algebra is of the form $\tilde{A}=(A,\alpha,m_A\circ (\alpha\ot \alpha)
=\alpha\circ m_A,\eta_A)$ where $(A,m_A,\eta_A)$ is an algebra in the
usual sense, and $\alpha\in \Aut(A)$ is an algebra automorphism.
\end{proposition}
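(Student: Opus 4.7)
The plan is to deduce \prref{2.3} as a direct specialization of \coref{1.9} to the monoidal category $\Cc=\Mm_k$. Recall that \coref{1.9} was itself a consequence of the strongly monoidal isomorphism $(F,\varphi_0,\varphi_2):\Hh(\Cc)\to\widetilde{\Hh}(\Cc)$ constructed in \prref{1.7}, together with \prref{1.8} which transports algebras along monoidal functors. Since $F$ is the identity on underlying objects and morphisms, the proof reduces to (i) describing algebras in $\Hh(\Mm_k)$ and (ii) reading off what happens to their multiplication under $F$.

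For (i), I would observe that the monoidal structure on $\Hh(\Mm_k)$ has the same tensor product, associator and unit constraints as $\Mm_k$, so the associativity and unit axioms for an algebra in $\Hh(\Mm_k)$ reduce to the classical ones. Hence an algebra in $\Hh(\Mm_k)$ is simply an ordinary $k$-algebra $(A,m_A,\eta_A)$ together with $\alpha\in\Aut(A)$ such that $m_A$ and $\eta_A$ are morphisms in $\Hh(\Mm_k)$; these latter conditions translate respectively to $\alpha\circ m_A=m_A\circ(\alpha\ot\alpha)$ and $\alpha\circ\eta_A=\eta_A$, i.e., to $\alpha$ being an algebra automorphism in the classical sense.

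For (ii), I would apply \prref{1.8} to the functor $(F,\varphi_0,\varphi_2)$ from \prref{1.7}, where $\varphi_2(A,A)=\alpha\ot\alpha$ and $\varphi_0=\Id$. The image $\tilde A=F(A,\alpha)$ is then an algebra in $\widetilde{\Hh}(\Mm_k)$ with unit $\eta_A\circ\varphi_0=\eta_A$ and multiplication
\[ m_{\tilde A}=F(m_A)\circ\varphi_2(A,A)=m_A\circ(\alpha\ot\alpha)=\alpha\circ m_A, \]
where the last equality uses the multiplicativity of $\alpha$ established in (i). Since $F$ is an isomorphism of categories, this assignment is a bijection between the algebras in $\Hh(\Mm_k)$ and the monoidal Hom-algebras, yielding exactly the structure claimed in \prref{2.3}.

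I do not expect a genuine obstacle, since \prref{1.7}, \prref{1.8} and \coref{1.9} already carry the conceptual load. The one sanity check I would perform is compatibility with the axioms of \prref{2.1}: namely, verifying by direct calculation that, when $m_{\tilde A}=\alpha\circ m_A$ with $m_A$ classically associative and $\alpha$ bijective and multiplicative, the Hom-associativity $\alpha(a)(bc)=(ab)\alpha(c)$ and the unit identity $a1_A=1_A a=\alpha(a)$ automatically hold; conversely, given a monoidal Hom-algebra $(A,\alpha,\tilde m,\eta_A)$ satisfying \prref{2.1}, setting $m_A=\alpha^{-1}\circ\tilde m$ recovers an ordinary associative unital product on $A$ for which $\alpha$ is an algebra automorphism. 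These are short verifications using only the bijectivity of $\alpha$.
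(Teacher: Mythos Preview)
Your proposal is correct and follows essentially the same approach as the paper, which simply records that \prref{2.3} is an immediate consequence of \coref{1.9} specialized to $\Cc=\Mm_k$. Your write-up is in fact more detailed than the paper's, spelling out the roles of \prref{1.7} and \prref{1.8} and adding the direct sanity check against the axioms of \prref{2.1}.
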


A coalgebra in $\widetilde{\Hh}(\Mm_k)$ will be called a \textit{monoidal Hom-coalgebra}\index{coalgebra!monoidal Hom-}.

\begin{proposition}\prlabel{2.4}
A monoidal Hom-coalgebra is an object $(C,\gamma)\in \widetilde{\Hh}(\Mm_k)$ together
with $k$-linear maps $\Delta:\ C\to C\ot C$,
$\Delta(c)=c_{(1)}\ot c_{(2)}$ (summation implicitly understood) and $\varepsilon:\ C\to k$ such that
\begin{equation}\eqlabel{2.4.1}
\Delta(\gamma(c))=\gamma(c_{(1)})\ot \gamma(c_{(2)})~~;~~
\varepsilon(\gamma(c))=\varepsilon(c);
\end{equation}
and
\begin{equation}\eqlabel{2.4.2}
\gamma^{-1}(c_{(1)})\ot \Delta(c_{(2)})=\Delta(c_{(1)})\ot \gamma^{-1}(c_{(2)})~~;~~
c_{(1)}\varepsilon(c_{(2)})=\varepsilon(c_{(1)})c_{(2)}=\gamma^{-1}(c),
\end{equation}
for all $c\in C$.
\end{proposition}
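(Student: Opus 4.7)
The plan is to unravel the definition of a coalgebra in the symmetric monoidal category $\widetilde{\Hh}(\Mm_k)$, dualizing the proof of \prref{2.1}. By definition, a monoidal Hom-coalgebra $(C,\gamma)$ is an object of $\widetilde{\Hh}(\Mm_k)$ together with morphisms $\Delta:\ (C,\gamma)\to (C,\gamma)\ot(C,\gamma)$ and $\varepsilon:\ (C,\gamma)\to (k,\Id)$ in $\widetilde{\Hh}(\Mm_k)$ satisfying coassociativity and the counit axiom, formulated now with respect to $\tilde{a}$, $\tilde{l}$ and $\tilde{r}$.

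First, I would note that the requirement that $\Delta$ and $\varepsilon$ be morphisms in $\widetilde{\Hh}(\Mm_k)$ is, by \equref{1.1.1}, precisely the pair of identities in \equref{2.4.1}. Next, I would translate the coassociativity diagram $\tilde{a}_{C,C,C}\circ (\Delta\ot C)\circ \Delta=(C\ot \Delta)\circ \Delta$ using the explicit formula $\tilde{a}_{C,C,C}((x\ot y)\ot z)=\gamma(x)\ot(y\ot\gamma^{-1}(z))$. Evaluated on $c\in C$, it becomes
$$\gamma(c_{(1)(1)})\ot c_{(1)(2)}\ot\gamma^{-1}(c_{(2)})=c_{(1)}\ot c_{(2)(1)}\ot c_{(2)(2)},$$
and applying $\gamma^{-1}$ to the leftmost tensorand produces the first identity of \equref{2.4.2}. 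Since $\gamma$ is bijective, every step is reversible.

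Finally I would handle the counit axiom. Coupling $\tilde{l}_C\circ(\varepsilon\ot C)\circ\Delta=C$ with the formula $\tilde{l}_C(x\ot c)=x\gamma(c)$ gives $\varepsilon(c_{(1)})\gamma(c_{(2)})=c$, and applying $\gamma^{-1}$ to both sides yields $\varepsilon(c_{(1)})c_{(2)}=\gamma^{-1}(c)$. The symmetric computation with $\tilde{r}_C$ produces $c_{(1)}\varepsilon(c_{(2)})=\gamma^{-1}(c)$, so together they give the second identity of \equref{2.4.2}. The argument is essentially bookkeeping and presents no real obstacle; the only subtle point, and the place where the equations look least symmetric, is that $\gamma^{-1}$ touches only one of the three tensorands in \equref{2.4.2}, which reflects the asymmetry built into $\tilde{a}$ (it twists by $\gamma$ on the left and $\gamma^{-1}$ on the right). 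Alternatively, one could simply invoke \coref{1.10} with $\Cc=\Mm_k$, which already packages this translation in the general monoidal setting.
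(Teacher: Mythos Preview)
Your proof is correct and follows essentially the same approach as the paper's: identify \equref{2.4.1} as the condition that $\Delta$ and $\varepsilon$ are morphisms in $\widetilde{\Hh}(\Mm_k)$, then unwind the coassociativity diagram through $\tilde{a}_{C,C,C}$ to obtain the first formula of \equref{2.4.2}, with the counit handled analogously. Your treatment is in fact slightly more explicit than the paper's, which leaves the counit computation to the reader; the closing remark about \coref{1.10} is a bit tangential, since that corollary yields the structure theorem (\coref{2.4.3}) rather than the axiomatic form of \prref{2.4} directly.
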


\begin{proof}
Let $(C,\gamma,\Delta,\varepsilon)$ be a coalgebra in $\widetilde{\Hh}(\Mm_k)$.
\equref{2.4.1} expresses the fact that $\Delta$ and $\varepsilon$ are morphisms in
$\widetilde{\Hh}(\Mm_k)$. The coassociativity of $\Delta$ is equivalent to the 
commutativity of the following diagram
$$\xymatrix{
&C\ot C\ar[rr]^(.4){\Delta\ot C}&&(C\ot C)\ot C\ar[dd]^{\tilde{a}_{C,C,C}}\\
C\ar[ru]^{\Delta}\ar[rd]_{\Delta}&&&\\
&C\ot C\ar[rr]^(.4){C\ot \Delta}&&C\ot (C\ot C)}$$
or
$$\gamma(c_{(1)(1)})\ot c_{(1)(2)}\ot \gamma^{-1}(c_{(2)})=
c_{(1)}\ot \Delta(c_{(2)}),$$
for all $c\in C$, which is clearly equivalent to the first formula in \equref{2.4.2}.
The counit property is handled in a similar way.
\end{proof}

From \coref{1.10}, we immediately obtain:

\begin{corollary}\colabel{2.4.3}
A monoidal Hom-coalgebra is of the form
$\tilde{C}=(C,\gamma,(\gamma^{-1}\ot \gamma^{-1})\circ\Delta=\Delta\circ\gamma^{-1},
\varepsilon)$, where $(C,\Delta,\varepsilon)$ is a coalgebra and $\gamma:\ C\to C$
is a coalgebra automorphism.
\end{corollary}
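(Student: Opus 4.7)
The plan is to deduce this directly from \coref{1.10}, specialized to $\Cc=\Mm_k$. First I would note that a coalgebra in $\Mm_k$ is exactly a $k$-coalgebra $(C,\Delta,\varepsilon)$ in the ordinary sense, and a coalgebra automorphism is a $k$-linear bijection $\gamma:C\to C$ satisfying $(\gamma\ot\gamma)\circ\Delta=\Delta\circ\gamma$ and $\varepsilon\circ\gamma=\varepsilon$. Feeding such a pair $(C,\gamma)$ into the structure formula of \coref{1.10} gives precisely the comultiplication $\Delta_{\tilde C}=(\gamma^{-1}\ot\gamma^{-1})\circ\Delta=\Delta\circ\gamma^{-1}$ and counit $\varepsilon_{\tilde C}=\varepsilon$ claimed in the statement.

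For the converse, I would start from an arbitrary monoidal Hom-coalgebra $(C,\gamma,\Delta_{\tilde C},\varepsilon_{\tilde C})$ as described in \prref{2.4} and recover an ordinary coalgebra structure on $C$ by setting $\Delta:=(\gamma\ot\gamma)\circ\Delta_{\tilde C}$ and $\varepsilon:=\varepsilon_{\tilde C}$. The axioms \equref{2.4.1} and \equref{2.4.2} translate, after applying $\gamma\ot\gamma\ot\gamma$ (respectively $\gamma$) to both sides, into the usual coassociativity $(\Delta\ot C)\circ\Delta=(C\ot\Delta)\circ\Delta$ and counit axioms for $(C,\Delta,\varepsilon)$, and simultaneously into the statement that $\gamma$ is a coalgebra automorphism with respect to this $\Delta$.

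These two passages are mutually inverse by construction, which gives the bijective correspondence asserted by the corollary. The only mild subtlety, which one sees more transparently via \prref{1.7} and its comonoidal inverse $(F,\varphi_0^{-1},\varphi_2^{-1})$, is the appearance of $\gamma^{-1}$ rather than $\gamma$ in the structure maps of $\tilde C$; this is the source of the two equivalent presentations $(\gamma^{-1}\ot\gamma^{-1})\circ\Delta=\Delta\circ\gamma^{-1}$. I would not expect any real obstacle beyond carefully unraveling the identification of the associativity constraint $\tilde{a}$ in $\widetilde{\Hh}(\Mm_k)$ with the explicit formula of \seref{2}, which was already done in the proof of \prref{2.4}.
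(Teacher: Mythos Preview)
Your proposal is correct and follows exactly the paper's approach: the paper's entire proof is the single line ``From \coref{1.10}, we immediately obtain'', and your first paragraph is precisely this specialization to $\Cc=\Mm_k$. The additional explicit verification of the converse you sketch (recovering the ordinary coalgebra structure by setting $\Delta=(\gamma\ot\gamma)\circ\Delta_{\tilde C}$) is more detail than the paper gives, but it is exactly the content of the comonoidal half of \prref{1.7} that underlies \coref{1.10}, so there is no divergence in method.
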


Hom-coalgebras\index{coalgebra!Hom-} were introduced by Makhlouf and Silvestrov in \cite{MS2},
and have been studied in \cite{MS3} and \cite{Yau3}. In \cite[Theorem 3.16]{MS3}, it
is observed that $\tilde{C}$, as considered in \coref{2.4.3} is a Hom-coalgebra.

A \textit{monoidal Hom-bialgebra}\index{bialgebra!monoidal Hom-} $H=(H,\alpha,m,\eta,\Delta,\varepsilon)$ is a bialgebra in
the symmetric monoidal category $\widetilde{\Hh}(\Mm_k)$. This means that
$(H,\alpha,m,\eta)$ is a monoidal Hom-algebra, $(H,\alpha,\Delta,\varepsilon)$ is a monoidal Hom-coalgebra
and that $\Delta$ and $\varepsilon$
are morphisms of monoidal Hom-algebras, that is
$$\Delta(bb')=\Delta(b)\Delta(b')~~;~~\Delta(1_B)=1_B\ot 1_B;$$
$$\varepsilon(bb')=\varepsilon(b)\varepsilon(b')~~;~~\varepsilon(1_B)=1.$$
From \prref{1.14}, it follows that a monoidal Hom-bialgebra is of the form
$H=(H,\alpha,m\circ(\alpha\ot \alpha),\eta,\Delta\circ\alpha^{-1},\varepsilon)$,
where $(H,m,\eta,\Delta,\varepsilon)$ is a bialgebra and $\alpha:\ H\to H$
is a bialgebra automorphism.\\

We now present a categorical interpretation of the definition of a monoidal Hom-bialgebra.
We can consider modules over a Hom-algebra $A=(A,\alpha)$. A left $(A,\alpha)$-\textit{Hom-module}\index{module!Hom-}
consists of $(M,\mu)\in \widetilde{\Hh}(\Mm_k)$ together with a morphism
$\psi:\ A\ot M\to M$, $\psi(a\ot m)=am$, in $\widetilde{\Hh}(\Mm_k)$ such that
\begin{equation}\eqlabel{2.4.4.1}
\alpha(a)(bm)=(ab)\mu(m)~~{\rm and}~~1_Am=\mu(m),
\end{equation}
for all $a\in A$ and $m\in M$. The fact that $\psi\in \widetilde{\Hh}(\Mm_k)$ means
that 
\begin{equation}\eqlabel{2.4.4.2}
\mu(am)=\alpha(a)\mu(m).
\end{equation}
A morphism $f:\ (M,\mu)\to (N,\nu)$ in
$\widetilde{\Hh}(\Mm_k)$ is called left $A$-linear if it preserves the $A$-action,
that is, $f(am)=af(m)$. ${}_A\widetilde{\Hh}(\Mm_k)$ will denote the category of
left $(A,\alpha)$-Hom-modules and $A$-linear morphisms.

\begin{proposition}\prlabel{2.4.4}
Let $H$ be a monoidal Hom-bialgebra. Then ${}_H\widetilde{\Hh}(\Mm_k)$ has a monoidal structure
such that the forgetful functor
${}_H\widetilde{\Hh}(\Mm_k)\to \widetilde{\Hh}(\Mm_k)$ is strong monoidal.
\end{proposition}

\begin{proof}
Take $(M,\mu),(N,\nu),(P,\pi)\in {}_H\widetilde{\Hh}(\Mm_k)$. 
$(M\ot N,\mu\ot\nu)\in {}_H\widetilde{\Hh}(\Mm_k)$ via the left
$H$-action
$$h\cdot (m\ot n)=h_{(1)}m\ot h_{(2)}n.$$
Let us verify the associativity and unit conditions:
for all $h,k\in H$, $m\in M$ and $n\in N$, we have
\begin{eqnarray*}
&&\hspace*{-13mm}
\alpha(h)\cdot (k\cdot (m\ot n))=
\alpha(h)\cdot (k_{(1)}m\ot k_{(2)}n)\\
&=& \alpha(h)_{(1)}(k_{(1)}m)\ot \alpha(h)_{(2)}(k_{(2)}n)
\equal{\equref{2.4.1}}
\alpha(h_{(1)})(k_{(1)}m)\ot \alpha(h_{(2)})(k_{(2)}n)\\
&\equal{\equref{2.4.4.1}}&(h_{(1)}k_{(1)})\mu(m)\ot (h_{(2)}k_{(2)})\nu(n)
=(hk)\cdot ((\mu\ot \nu)(m\ot n));\\
&&\hspace*{-13mm}
1_H\cdot (m\ot n)=1_H\cdot m\ot 1_H\cdot n=(\mu\ot \nu)(m\ot n).
\end{eqnarray*}
\equref{2.4.4.2} is satisfied since
\begin{eqnarray*}
&&\hspace*{-13mm}
(\mu \ot \nu)(h\cdot (m\ot n))=\mu(h_{(1)}m)\ot \nu(h_{(2)}n)\\
&=& \alpha(h_{(1)})\mu(m)\ot \alpha(h_{(2)})\nu(n)=\alpha(h)\cdot ((\mu\ot \nu)(m\ot n)).
\end{eqnarray*}
On $k$, we define a left $H$-action as follows: $h\cdot x=\varepsilon(h)x$. It
is straightforward to show that $(k,k)\in {}_H\widetilde{\Hh}(\Mm_k)$. Let us also show that
the associativity and unit constraint are left $H$-linear. For all $h\in H$,
$m\in M$, $n\in N$ and $p\in P$, we have
\begin{eqnarray*}
&&\hspace*{-2cm}
a_{M,N,P}(h\cdot ((m\ot n)\ot p))=
a_{M,N,P}((h_{(1)(1)}m\ot h_{(1)(2)}n)\ot h_{(2)}p)\\
&=& \mu(h_{(1)(1)}m)\ot (h_{(1)(2)}n\ot \pi^{-1}(h_{(2)}p))\\
&=&\mu(\alpha^{-1}(h_{(1)})m)\ot (h_{(2)(1)}n)\ot \pi^{-1}(\alpha(h_{(2)(2)})p))\\
&=& h_{(1)}\mu(m)\ot (h_{(2)(1)}n\ot h_{(2)(2)}\pi^{-1}(p))\\
&=& h\cdot (\mu(m)\ot (n\ot \pi^{-1}(p)))\\
&=& h\cdot a_{M,N,P}((m\ot n)\ot p);\\
&&\hspace*{-2cm}
l_M(h\cdot (x\ot m))=\varepsilon(h)x\mu(m)= h\cdot l_M(x\ot m).
\end{eqnarray*}
\end{proof}

\begin{remark}\relabel{2.4.5}
For a bialgebra in $\Mm_k$, we also have the converse property. Let $H$ be a
$k$-algebra, and assume that we have a monoidal structure on ${}_H\Mm$, the category of left $H$-modules, such that
the forgetful functor ${}_H\Mm\to \Mm$ is strong monoidal. Then a bialgebra structure
on $H$ is defined as follows: $\Delta(h)=h\cdot (1_H\ot 1_H)$ and $\varepsilon(h)=
h\cdot 1_k$. This bialgebra structure determines the monoidal structure on ${}_H\Mm$
completely: for $m\in M$, $n\in N$, define $f_m:\ H\to M$, $g_N:\ H\to N$ by
$f_m(h)=h\cdot m$, $g_n(h)=h\cdot n$. From the fact that $f_m\ot g_n\in {}_H\Mm$,
it then follows that $h\cdot (m\ot n)=h_{(1)}\cdot m\ot h_{(2)}\cdot n$.\\
In the Hom case, the first part of this argument still applies: we have a 
monoidal Hom-bialgebra
structure on $H$. The problem is that it does not determine the monoidal structure
completely; $f_m$ and $g_n$ are not morphisms in $\widetilde{\Hh}(\Mm_k)$.
Remark also that $(H,\alpha)$ is not a generator of ${}_H\widetilde{\Hh}(\Mm_k)$.
\end{remark}

Now let $C=(C,\gamma)$ be a monoidal Hom-coalgebra. A right $(C,\gamma)$-\textit{Hom-comodule}\index{comodule!Hom-}
is an object $(M,\mu)\in \widetilde{\Hh}(\Mm_k)$ together with a $k$-linear map
$\rho:\ M\to M\ot C$, notation $\rho(m)=m_{[0]}\ot m_{[1]}$ in $\widetilde{\Hh}(\Mm_k)$
such that 
$$\mu^{-1}(m_{[0]})\ot \Delta_{C}(m_{[1]})=m_{[0][0]}\ot (m_{[0][1]} \ot \gamma^{-1}(m_{[1]}))~~{\rm and}~~
m_{[0]}\varepsilon(m_{[1]})=\mu^{-1}(m)$$
for all $m\in M$. The fact that $\rho\in \widetilde{\Hh}(\Mm_k)$ means that
$$\rho(\mu(m))=\mu(m_{[0]})\ot \gamma(m_{[1]}).$$
Morphisms of right $(C,\gamma)$-Hom-comodule are defined in the obvious way.
The category of right $(C,\gamma)$-Hom-comodules will be denoted by 
$\widetilde{\Hh}(\Mm_k)^C$.

\begin{proposition}\prlabel{2.4.5}
Let $H$ be a monoidal Hom-bialgebra. Then $\widetilde{\Hh}(\Mm_k)^H$ has a monoidal structure
such that the forgetful functor
$\widetilde{\Hh}(\Mm_k)^H\to \widetilde{\Hh}(\Mm_k)$ is strong monoidal.
\end{proposition}

\begin{proof}
Take $(M,\mu),(N,\nu)\in \widetilde{\Hh}(\Mm_k)^H$. Then $(M\ot N,\mu\ot\nu)$
is a Hom-comodule:
$$\rho(m\ot n)=m_{[0]}\ot n_{[0]}\ot m_{[1]}n_{[1]}.$$
\end{proof}

Let $(C,\gamma)$ be a Hom-coalgebra, and $(A,\alpha)$ a Hom-algebra.
Then $(\Hom(C,A),$ $\alpha\circ-\circ\gamma^{-1})$ is a Hom-algebra with the
convolution as product, and unit $\eta\circ\varepsilon$. This result can be found
in \cite[Theorem 3.15]{MS2} and \cite[Prop. 3.4]{MS3}. The proof is a straightforward computation. 

For $M,N\in \widetilde{\Hh}(\Mm_k)$, let $\Hom^\Hh(M,N)$ be the submodule of
$\Hom(M,N)$ consisting of $k$-linear maps $M\to N$ that are morphisms in
$\widetilde{\Hh}(\Mm_k)$.

Let $(C,\gamma)$ be a monoidal Hom-coalgebra, and $(A,\alpha)$ a  monoidal Hom-algebra.
It is then easy to show that
$\Hom^\Hh(C,A)$ is an associative algebra with unit $\eta\circ\varepsilon$.

A \textit{monoidal Hom-Hopf algebra}\index{Hopf algebra!monoidal Hom-} is by definition a Hopf algebra in $\widetilde{\Hh}(\Mm_k)$.
It follows that a Hom-bialgebra $(H,\alpha)$ is a monoidal Hom-Hopf algebra if there
exists a morphism $S:\ H\to H$ in $\widetilde{\Hh}(\Mm_k)$
(i.e. $S$ commutes with $\alpha$) such that
$S*H=H*S=\eta\circ \varepsilon$. Monoidal Hom-Hopf algebras are examples of
Hom-Hopf algebras\index{Hopf algebra!Hom-}, as introduced in \cite{MS2}.
We finish this Section with some elementary
properties of the antipode. The fact that $S(1)=1$ and $\varepsilon\circ S=\varepsilon$
has been observed before in \cite[Prop. 3.14]{MS2} and \cite[Prop. 3.23]{MS3}.

\begin{proposition}\prlabel{2.4.8}
Let $H$ be a monoidal Hom-Hopf algebra. Then
$$S(hk)=S(k)S(h)~~;~~S(1)=1;$$
$$\Delta(S(h))=S(h_{(2)})\ot S(h_{(1)})~~;~~\varepsilon\circ S=\varepsilon.$$
\end{proposition}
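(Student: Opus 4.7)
My plan is to mimic the standard Hopf-algebra arguments inside the symmetric monoidal category $\widetilde{\Hh}(\Mm_k)$. The first two identities, $S(1_H)=1_H$ and $\varepsilon\circ S=\varepsilon$, follow directly from the antipode axiom $S*\mathrm{id}_H=\mathrm{id}_H*S=\eta\circ\varepsilon$, while the anti-multiplicativity and anti-comultiplicativity of $S$ are obtained by uniqueness of convolution inverses in the associative algebras $\Hom^{\Hh}(H\ot H,H)$ and $\Hom^{\Hh}(H,H\ot H)$ introduced just before the proposition.

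For $S(1_H)=1_H$, I would evaluate $(S*\mathrm{id}_H)(1_H)=\eta\varepsilon(1_H)=1_H$: using $\Delta(1_H)=1_H\ot 1_H$ (because $\Delta$ is a Hom-algebra morphism), the left side becomes $S(1_H)\cdot 1_H$, which by the Hom-unit axiom in \equref{2.1.2} equals $\alpha(S(1_H))$. Since $\alpha(1_H)=1_H$ and $\alpha$ is invertible, $S(1_H)=1_H$. For $\varepsilon\circ S=\varepsilon$, apply the multiplicative map $\varepsilon$ to $(S*\mathrm{id}_H)(h)=\varepsilon(h)1_H$ to obtain
\[
\varepsilon(S(h_{(1)}))\,\varepsilon(h_{(2)})=\varepsilon(h).
\]
By $k$-linearity of $\varepsilon\circ S$, the counit identity $\varepsilon(h_{(2)})h_{(1)}=\gamma^{-1}(h)$ from \equref{2.4.2}, the fact that $S$ is a morphism in $\widetilde{\Hh}(\Mm_k)$ (so $\gamma\circ S=S\circ\gamma$), and $\varepsilon\circ\gamma=\varepsilon$ from \equref{2.4.1}, the left-hand side collapses to $\varepsilon(S(h))$.

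For the anti-algebra property, I would verify that both $S\circ m_H$ and $m_H\circ c_{H,H}\circ(S\ot S)$ are two-sided convolution inverses of $m_H\in\Hom^{\Hh}(H\ot H,H)$, where $H\ot H$ carries its tensor-product Hom-coalgebra structure via the symmetry $c$; uniqueness of inverses in the associative convolution algebra then forces the two to coincide, and evaluation at $h\ot k$ yields $S(hk)=S(k)S(h)$. Dually, in $\Hom^{\Hh}(H,H\ot H)$, one shows that $\Delta\circ S$ and $c_{H,H}\circ(S\ot S)\circ\Delta$ both convolution-invert $\Delta$, giving $\Delta(S(h))=S(h_{(2)})\ot S(h_{(1)})$.

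The main technical nuisance in the direct approach is the bookkeeping of the associator $\tilde{a}$ and the $\alpha$-twists coming from Hom-associativity and the Hom-unit: each convolution of three morphisms must be bracketed with an explicit associator. A conceptually cleaner route is to invoke \prref{1.14}, which presents $\tilde H$ as an ordinary Hopf algebra $(H,m,\eta,\Delta,\varepsilon,S)$ twisted by a bialgebra automorphism $\alpha$; uniqueness of antipodes then forces $\alpha\circ S=S\circ\alpha$, and the classical identities on $H$ transport verbatim to the four Hom-identities on $\tilde H$.
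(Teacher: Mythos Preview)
Your approach is correct and essentially matches the paper's: the paper also works in the associative convolution algebra $\Hom^{\Hh}(H\ot H,H)$ and proves anti-multiplicativity by comparing convolution inverses of $m_H$ (the other three identities are not written out, being attributed to prior literature). One small refinement worth adopting: rather than checking that \emph{both} $G=S\circ m_H$ and $F=m_H\circ c_{H,H}\circ(S\ot S)$ are two-sided inverses of $m_H$, the paper verifies only that $G$ is a \emph{left} inverse and $F$ a \emph{right} inverse of $m_H$; in any monoid this already forces $F=G$, halving the amount of Hom-associativity bookkeeping. Your alternative route via \prref{1.14} --- transporting the classical identities through the bialgebra automorphism $\alpha$ --- is a legitimate and cleaner shortcut that the paper does not take.
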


\begin{proof}
We will only prove the first statement.
Consider the morphisms $F,G\in \Hom^\Hh(H\ot H,H)$ given by
$$F(h\ot k)=S(k)S(h)~~;~~G(h\ot k)=S(hk).$$
We show that $G$ is a left inverse, and $F$ is a right inverse for the
multiplication map $m$. This implies that $F=G$, which is the first formula.
\begin{eqnarray*}
&&\hspace*{-2cm}
(G*m)(h\ot k)=S(h_{(1)}k_{(1)})h_{(2)}k_{(2)}\\
&=&S((hk)_{(1)})(hk)_{(2)}=
\varepsilon(hk)1_H;\\
&&\hspace*{-2cm}
(m*F)(h\ot k)= (h_{(1)}k_{(1)})(S(k_{(2)})S(h_{(2)}))\\
&=& ((\alpha^{-1}(h_{(1)})\alpha^{-1}(k_{(1)}))S(k_{(2)}))\alpha(S(h_{(2)}))\\
&=& (h_{(1)}(\alpha^{-1}(k_{(1)})\alpha^{-1}(S(k_{(2)})))\alpha(S(h_{(2)}))\\
&=& (h_{(1)} \alpha^{-1}(k_{(1)}S(k_{(2)})))\alpha(S(h_{(2)}))\\
&=& (h_{(1)} \alpha^{-1}(\varepsilon(k)1_H))\alpha(S(h_{(2)}))\\
&=& \varepsilon(k) (h_{(1)}1_H)\alpha(S(h_{(2)}))\\
&=& \varepsilon(k) \alpha(h_{(1)})\alpha(S(h_{(2)}))=\varepsilon(k)\varepsilon(h)1_H.
\end{eqnarray*}
\end{proof}

\section{Hom-Hopf modules and the Fundamental Theorem}\selabel{2a}
Throughout this Section $(H,\alpha)$ will be a monoidal Hom-Hopf algebra. A right $(H,\alpha)$-\textit{Hom-Hopf module}\index{module!Hom-Hopf} $(M,\mu)$ is defined as being a right $(H,\alpha)$-Hom-module which is a right $(H,\alpha)$-Hom-comodule as well, satisfying the following compatibility relation:
\begin{equation}\eqlabel{2.5.1}
\rho(mh)=m_{[0]}h_{(0)}\ot m_{[1]}h_{(1)},
\end{equation}
whenever $m\in M$ and $h\in H$. Considering the Hom-algebra morphism $\Delta:\ H\to H\ot H$, $(M\ot H, \mu \ot \alpha)$ becomes a right $(H,\alpha)$-Hom-module (by putting $(m\ot h)g=mg_{(1)}\ot hg_{(2)}$ for any $m\in M$ and $h,g\in H$). The compatibility relation \equref{2.5.1} then means that $\rho$ is a morphism of right $(H,\alpha)$-Hom-modules. \\
A morphism between two right $(H,\alpha)$-Hom-Hopf modules is a $k$-linear map which is a morphism in the categories $\widetilde{\Hh}(\Mm_k)_{H}$ and $\widetilde{\Hh}(\Mm_k)^{H}$ at the same time. $\widetilde{\Hh}(\Mm_k)_{H}^{H}$ will denote the category of  right $(H,\alpha)$-Hom-Hopf modules and morphisms between them.  \\

Let $(N,\nu)\in \widetilde{\Hh}(\Mm_k)$. Consider $(N\ot H, \nu \ot \alpha)$ and the following two $k$-linear maps:
$$\psi: (N\ot H)\ot H \to N\ot H,~~ \psi((n\ot h)\ot g)= \nu(n)\ot h\alpha^{-1}(g);$$
$$ \rho: N\ot H \to (N\ot H)\ot H,~~\rho(n\ot h)= (\nu^{-1}(n)\ot h_{(1)})\ot \alpha(h_{(2)}).$$
It is easily checked that $\psi$, resp. $\rho$ define a right $(H,\alpha)$-Hom-module, resp. right $(H,\alpha)$-Hom-comodule structure on $(N\ot H, \nu \ot \alpha)$, and that
 the compatibility relation \equref{2.5.1} is satisfied. This construction is functorial, so we have a functor:
$$F=(-\ot H, -\ot \alpha): \widetilde{\Hh}(\Mm_k) \to \widetilde{\Hh}(\Mm_k)_{H}^{H}.$$
Let $(M,\mu)$ be a right $(H,\alpha)$-Hom-Hopf module and consider the set
$$M^{{\rm co}H}=\{m\in M~|~\rho(m)={\mu}^{-1}(m)\ot 1\},$$ 
then $(M^{{\rm co}H},\mu_{|M^{{\rm co}H}})\in \widetilde{\Hh}(\Mm_k)$. This construction is also functorial, so we have a functor:
$$G= (-)^{{\rm co}H}: \widetilde{\Hh}(\Mm_k)_{H}^{H}  \to \widetilde{\Hh}(\Mm_k).$$

\begin{theorem}\thlabel{2a.1}
$(F,G)$ is a pair of inverse equivalences.
\end{theorem}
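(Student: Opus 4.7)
My plan is to exhibit explicit natural transformations $\eta:\mathrm{Id}\to GF$ and $\theta:FG\to \mathrm{Id}$ that serve as the unit and counit of an equivalence $(F,G)$. For $(N,\nu)\in \widetilde{\Hh}(\Mm_k)$, set $\eta_N(n)=n\ot 1_H$. The identities $\Delta(1_H)=1_H\ot 1_H$ and $\alpha(1_H)=1_H$ guarantee that $\eta_N(n)\in F(N)^{\mathrm{co}H}$ and that $\eta_N$ is a morphism in $\widetilde{\Hh}(\Mm_k)$. For the bijectivity of $\eta_N$ onto $F(N)^{\mathrm{co}H}$, given $\sum_i n_i\ot h_i\in F(N)^{\mathrm{co}H}$, I would apply $\nu\ot \varepsilon\ot H$ to the coinvariance equation and use the Hom-counit identity $\varepsilon(h_{(1)})h_{(2)}=\alpha^{-1}(h)$ from \equref{2.4.2} together with $\varepsilon\circ\alpha=\varepsilon$ from \equref{2.4.1}; this collapses the sum to $\sum_i n_i\varepsilon(h_i)\ot 1_H$, which lies in the image of $\eta_N$. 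Naturality of $\eta$ and injectivity of $\eta_N$ are immediate.

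For the counit, given $(M,\mu)\in \widetilde{\Hh}(\Mm_k)_H^H$, I define $\theta_M:M^{\mathrm{co}H}\ot H\to M$ by $\theta_M(m\ot h)=mh$. That $\theta_M$ lies in $\widetilde{\Hh}(\Mm_k)$ follows from the right-module analogue of \equref{2.4.4.2}, namely $\mu(mh)=\mu(m)\alpha(h)$; right $H$-linearity follows from Hom-associativity of the action; right $H$-colinearity is a direct verification from \equref{2.5.1} combined with the coinvariance of $m$. Naturality in $M$ is routine.

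To show $\theta_M$ is an isomorphism I will construct its inverse using the antipode. Define a ``projection'' $P:M\to M$ by $P(m)=m_{[0]}S(m_{[1]})$. Using Hom-coassociativity \equref{2.4.2}, the compatibility \equref{2.5.1}, the formula for $\Delta\circ S$ given in \prref{2.4.8}, and the antipode axiom, one checks that $\rho(P(m))=\mu^{-1}(P(m))\ot 1_H$, so that $P$ takes values in $M^{\mathrm{co}H}$. Then define $\lambda_M:M\to M^{\mathrm{co}H}\ot H$ by $\lambda_M(m)=P(\mu^{-1}(m_{[0]}))\ot m_{[1]}$, with the twists $\mu^{\pm 1}$ placed so that $\lambda_M$ is a morphism in $\widetilde{\Hh}(\Mm_k)$. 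The identity $\theta_M\circ\lambda_M=\mathrm{id}_M$ then reduces, after unfolding, to $m_{[0][0]}(S(m_{[0][1]})m_{[1]})=m$; this I would derive from Hom-coassociativity applied to $\rho^2(m)$, the antipode axiom $m_H\circ(S\ot H)\circ\Delta=\eta\circ\varepsilon$ applied to $m_{[1]}$, the Hom-associativity of the action, and the counit condition $m_{[0]}\varepsilon(m_{[1]})=\mu^{-1}(m)$. Conversely $\lambda_M\circ\theta_M=\mathrm{id}$ uses that on coinvariants the map $P$ reduces to a twist of the identity.

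The main obstacle will be the bookkeeping of the twists $\mu^{\pm 1}$, $\nu^{\pm 1}$ and $\alpha^{\pm 1}$ in the definitions of $P$ and $\lambda_M$, so that all intermediate maps genuinely live in the Hom-category and so that the defining identities of $(F,G)$ as inverse equivalences collapse cleanly via the Hom-axioms. Conceptually, once the twists are placed correctly, the argument runs parallel to the classical Fundamental Theorem of Hopf modules over a Hopf algebra in $\Mm_k$, as one expects from the monoidal equivalence \prref{1.7}.
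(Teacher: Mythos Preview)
Your strategy is the paper's: it defines the same counit $\varepsilon_{(M,\mu)}(m\ot h)=mh$, proves $P(m)=m_{[0]}S(m_{[1]})\in M^{\mathrm{co}H}$, builds the inverse of the counit as $m\mapsto m_{[0][0]}S(m_{[0][1]})\ot m_{[1]}$, and shows coinvariants of $N\ot H$ collapse to $N\ot 1_H$; the only structural addition is that the paper first verifies the triangle identities, so that $(F,G)$ is an adjunction before it is an equivalence.

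Two remarks on your twists, since you flag them as the obstacle. First, the paper's unit is $\eta_{(N,\nu)}(n)=\nu^{-1}(n)\ot 1_H$, not $n\ot 1_H$; the $\nu^{-1}$ is forced if you want the triangle identity with $\theta$, because $(n\ot 1_H)\cdot h=\nu(n)\ot h$ under the action $\psi$. Your $\eta_N$ is still a natural isomorphism, so the equivalence goes through, but it is not the unit of the adjunction. Second, the paper's inverse of $\theta_M$ carries \emph{no} $\mu^{-1}$ on $m_{[0]}$; note that your proposed criterion ``$\lambda_M$ is a morphism in $\widetilde{\Hh}(\Mm_k)$'' does not determine the twist, since both your formula and the paper's are Hom-morphisms---the constraint that actually pins it down is $\theta_M\circ\lambda_M=\mathrm{id}_M$, which your extra $\mu^{-1}$ spoils.
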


\begin{proof}
1) We first show that $(F,G)$ is a pair of adjoint functors.
The unit and the counit of the adjunction are defined as follows. For $(M,\mu)\in \widetilde{\Hh}(\Mm_k)_{H}^{H}$,
$\varepsilon_{(M,\mu)}: (M^{{\rm co}H}\ot H,\mu_{|M^{{\rm co}H}}\ot \alpha)\to (M,\mu)$
is defined by the formula
$$\varepsilon_{(M,\mu)}(m\ot h)=mh.$$
$\varepsilon_{(M,\mu)}$ is a morphism in $\widetilde{\Hh}(\Mm_k)_{H}$,
and it follows from the compatibility relation \equref{2.5.1} that it is a morphism in $\widetilde{\Hh}(\Mm_k)^{H}$ as well. For $(N,\nu)\in \widetilde{\Hh}(\Mm_k)$, we define
$\eta_{(N,\nu)}: (N,\nu)\to ((N\ot H)^{{\rm co}H}, (\nu\ot \alpha)_{|(N\ot H)^{{\rm co}H}})$
as follows:
$$\eta_{(N,\nu)}(n)={\nu}^{-1}(n)\ot 1.$$
$\eta_{(N,\nu)}$ is well-defined since $\rho_{N\ot H}({\nu}^{-1}(n)\ot 1)=({\nu}^{-2}(n)\ot 1)\ot \alpha(1)=({\nu}\ot {\alpha})^{-1}({\nu}^{-1}(n)\ot 1)\ot 1$, for all $n\in N$.\\
For $(N,\nu)\in \widetilde{\Hh}(\Mm_k)$ and $(M,\mu)\in \widetilde{\Hh}(\Mm_k)^{H}_{H}$,
$n\in N$, $h\in H$ and $m\in M$, we easily compute that
$$(\varepsilon_{F(N,\nu)}\circ F\eta_{(N,\nu)})(n\ot h)=({\nu}^{-1}(n)\ot 1)h=n\ot h;$$
$$(G\varepsilon_{(M,\mu)}\circ \eta_{G(M,\mu)})(m)=\mu^{-1}(m)1_{H}\equal{\equref{2.4.4.1}} \mu(\mu^{-1}(m))=m.$$
2) Let us now prove that $\varepsilon$ is a natural isomorphism.
Let $(M,\mu)$ be a right $(H,\alpha)$-Hom-Hopf module, and take $m\in M$.
Then $m_{[0]}S(m_{[1]})\in M^{{\rm co}H}$, since 
\begin{eqnarray*}
  &&\hspace*{-2cm}
  \rho(m_{[0]}S(m_{[1]}))
  =
  m_{[0][0]}S(m_{[1]})_{(1)}\ot m_{[0][1]}S(m_{[1]})_{(2)}\\
  &=&
  m_{[0][0]}S(m_{[1](2)})\ot m_{[0][1]}S(m_{[1](1)})\\
  &=&
  m_{[0][0]}\alpha^{-1}(S(m_{[1]}))\ot \alpha(m_{[0][1](1)}S(m_{[0][1](2)}))\\
  &=&
  m_{[0][0]}\alpha^{-1}(S(m_{[1]}))\ot \alpha(\eta(\varepsilon(m_{[0][1]})))\\
  &=&
  m_{[0][0]}\alpha^{-1}(S(m_{[1]}))\ot \varepsilon(m_{[0][1]})1_{H}\\
  &=&
  \mu^{-1}(m_{[0]})\alpha^{-1}(S(m_{[1]}))\ot \alpha^{-1}(1_{H})\\
  &=&
  \mu^{-1}(m_{[0]}S(m_{[1]}))\ot 1_{H}.
\end{eqnarray*}
Now we define $\alpha: (M,\mu)\to (M^{{\rm co}H}\ot H,\mu_{|M^{{\rm co}H}}\ot \alpha)$ as follows:
$$\alpha(m)=m_{[0][0]}S(m_{[0][1]})\ot m_{[1]}.$$
$\alpha$ is the inverse of $\varepsilon_{(M,\mu)}$: for all $m\in M$, we have
\begin{eqnarray*}
  &&\hspace*{-1cm}
 \varepsilon_{(M,\mu)}(\alpha(m))
  =
 \varepsilon_{(M,\mu)}(m_{[0][0]}S(m_{[0][1]})\ot m_{[1]})
  =
 \varepsilon_{(M,\mu)}(m_{[0]}\ot \eta(\varepsilon(m_{[1]})))\\
  &=&
    \varepsilon_{(M,\mu)}(m_{[0]}\varepsilon(m_{[1]})\ot1_{H})
  =
  \varepsilon_{(M,\mu)}({\mu}^{-1}(m)\ot1_{H})=m.
\end{eqnarray*}
and, for $m'\in M^{{\rm co}H}$ and $h\in H$,
\begin{eqnarray*}
  &&\hspace*{-1cm}
  \alpha(\varepsilon_{(M,\mu)}(m'\ot h))
 =
  (m'h)_{[0][0]}S((m'h)_{[0][1]})\ot (m'h)_{[1]}\\
  &=&
  (\mu^{-1}(m')_{[0]}h_{(1)(1)})S(\mu^{-1}(m')_{[1]}h_{(1)(2)})\ot 1_{H}h_{(2)}\\
  &=&
  (\mu^{-1}(m'_{[0]})h_{(1)(1)})S(\alpha^{-1}(m'_{[1]})h_{(1)(2)})\ot \alpha(h_{(2)})\\
  &=&
  (\mu^{-2}(m')h_{(1)(1)})S(\alpha(h_{(1)(2)}))\ot \alpha(h_{(2)})\\
  &=&
  \mu^{-1}(m')(h_{(1)(1)}S(h_{(1)(2)}))\ot \alpha(h_{(2)})\\
  &=&
  \mu^{-1}(m')1_{H}\ot \varepsilon(h_{(1)})\alpha(h_{(2)})
  =
  m'\ot h.
\end{eqnarray*}
3) Finally, we will show that $\eta$ is a natural isomorphism.
Take $(N,\nu)\in \widetilde{\Hh}(\Mm_k)$ and $n\ot h \in (N\ot H)^{{\rm co}H}$. Then
$$\rho(n\ot h)=(\nu^{-1}(n)\ot h_{(1)})\ot \alpha(h_{(2)})=(\nu^{-1}(n)\ot \alpha^{-1}(h))\ot 1_{H}.$$
Applying 
$(\nu\ot \varepsilon) \ot \alpha$ to both sides of this equation, we obtain that
$$(n\ot \varepsilon(h_{(1)}))\ot \alpha(h_{(2)})=(n\ot \varepsilon(h))\ot 1_{H}.$$
Now define 
$\beta:((N\ot H)^{{\rm co}H},{\nu\ot \alpha}_{|(N\ot H)^{{\rm co}H}})\to (N,\nu)$
by the formula
$$\beta(n\ot h)=\nu(n)\varepsilon(h).$$
$\beta$ is the inverse of $\eta_{(N,\nu)}$: for all $n\ot h \in (N\ot H)^{{\rm co}H}$ we have
\begin{eqnarray*}
  &&\hspace*{-1cm}
  \eta_{(N,\nu)}(\beta(n\ot h))
 =
  \nu^{-1}(\nu(n)\varepsilon(h))\ot 1_{H}
 =
  n\varepsilon(h)\ot 1_{H}\\
  &=&
  n\ot \varepsilon(h_{(1)}) \alpha(h_{(2)})
  =
  n\ot \alpha^{-1}(\alpha(h))
  =
  n\ot h
\end{eqnarray*}
and for all $n\in N$, we have
$(\beta \circ \eta_{(N,\nu)})(n)=\nu(\nu^{-1}(n)\varepsilon(1_{H}))=n$.
\end{proof}

\section{Hom-group algebras}\selabel{3}
The first example of a classical Hopf algebra is a group algebra. We will now generalize
this example to the Hom situation.\\
Recall that the category of sets $({\sf Sets},\times,\{*\},\tau)$ (where $\tau$ is the twist) is a symmetric (strict) monoidal category. The linearizing functor
$L:\ {\sf Sets}\to \Mm_k$, $L(X)=kX$ being the free $k$-module with basis $X$,
is strong monoidal, and preserves the symmetry.\\ Recall
that algebras in ${\sf Sets}$ are just monoids and that every set $X$ has a unique structure of coalgebra
in ${\sf Sets}$, namely $\delta(x)=(x,x)$ and $\varepsilon(x)=*$, for all $x\in X$.
This coalgebra structure makes every monoid a bialgebra in ${\sf Sets}$. Finally,
a Hopf algebra in ${\sf Sets}$ is a group. Applying the functor $L$, we obtain
algebras, coalgebras etc. in $\Mm_k$, as we recalled in \seref{Hopfalgbraidedmonoidal}.\\
Recall that the Hom-construction discussed in \seref{1} is functorial in the following sense (cf. \prref{1.4} and \prref{1.5}):
for a functor $F:\ \Cc\to \Dd$, we have a functor $\Hh(F):\ \Hh(\Cc)\to \Hh(\Dd)$
given by
$$\Hh(F)(M,\mu)=(F(M),F(\mu)),~~\Hh(F)(f)=F(f).$$
If $\Cc$ and $\Dd$ are monoidal categories, and $F$ is strong monoidal, then we
have strong monoidal functors
$$\Hh(F):\ \Hh(\Cc)\to \Hh(\Dd)~~{\rm and}~~\widetilde{\Hh}(F):\ 
\widetilde{\Hh}(\Cc)\to \widetilde{\Hh}(\Dd).$$
In particular, we have a strong monoidal functor
$\widetilde{\Hh}(L):\ \widetilde{\Hh}({\rm Sets})\to \widetilde{\Hh}(\Mm_k)$.\\
The results of \seref{1} give us structure Theorems for algebras, coalgebras,...
in $\widetilde{\Hh}({\sf Sets})$. For example, let $X$ be a set, and $\xi$ a
permutation of $X$. Consider the maps $\delta:\ X\to X\times X$,
$\varepsilon:\ X\to \{*\}$ given by
$\delta(x)=(\xi^{-1}(x),\xi^{-1}(x))$, $\varepsilon(x)=*$. Then $(X,\xi,\delta,\varepsilon)$
is a Hom-comonoid (that is, a coalgebra in $\widetilde{\Hh}({\sf Sets})$), and
every Hom-comonoid is of this type, by \coref{1.10}.\\
In a similar way, if $\varphi$ is an automorphism of a group $G$, then $(G,\varphi)$
with structure maps
$$g\cdot h=\varphi(gh),~~\eta(*)=1_G,~~\delta(g)=(\varphi^{-1}(g),\varphi^{-1}(g)),$$
$$\varepsilon(g)=*,~~S(g)=g^{-1},$$
is a Hom-group, that is a Hopf algebra in $\widetilde{\Hh}({\sf Sets})$.
Applying the linearizing functor $\widetilde{\Hh}(L)$ to a Hom-comonoid, a Hom-monoid, 
or a Hom-group, we obtain resp. a Hom-coalgebra, a Hom-bialgebra,
a Hom-Hopf algebra. The image under $\widetilde{\Hh}(L)$ of a Hom-group is called
a \textit{Hom-group algebra}\index{algebra!Hom-group}. It is the free $k$-module with basis $G$, and the above 
structure maps extended linearly.
\begin{remark}\relabel{homgroup}
Let $G$ be a group and let $\varphi$ and $\varphi'$ be two automorphisms of $G$ that belong to the same conjugacy class. 
It immediately follows from the above construction that $\widetilde{\Hh}(L)(G,\varphi)$ and $\widetilde{\Hh}(L)(G,\varphi')$ are isomorphic as Hom-algebras.
\end{remark}
\section{Monoidal Hom-Lie algebras}\selabel{4}\index{Lie algebra!monoidal Hom-}
As $\Mm_{k}$ is an additive, symmetric monoidal category and combining \prref{1.2} and \reref{Homadditivity}, we immediately have that $\widetilde{\Hh}(\Mm_k)$ is an additive, symmetric monoidal category as well. So we can compute Lie algebras in $\widetilde{\Hh}(\Mm_k)$. Let us see what comes out if we do so.
\\Let $(L,\alpha)\in \widetilde{\Hh}(\Mm_k)$, and $[-,-]:\ L\ot L\to L$ a morphism
in $\widetilde{\Hh}(\Mm_k)$ (that is, $[\alpha(x),\alpha(y)]=\alpha[x,y]$). Then
condition \equref{ASsymm} is equivalent to
$$[x,y]+[y,x]=0,$$
for all $x,y\in L$. Using same notation as in \seref{Hopfalgbraidedmonoidal}, we easily compute that
$$t_{c}(x\ot (y\ot z))=\alpha(z)\ot (\alpha^{-1}(x)\ot y);$$
$$w_{c}(x\ot (y\ot z))=\alpha(y)\ot (z\ot \alpha^{-1}(x)).$$
\equref{Jacsymm} is therefore equivalent to
$$[x,[y,z]]+[\alpha(y),[z,\alpha^{-1}(x)]]+ [\alpha(z),[\alpha^{-1}(x),y]]=0$$
or, replacing $x$ by $\alpha(x)$,
$$[\alpha(x),[y,z]]+[\alpha(y),[z,x]]+ [\alpha(z),[x,y]]=0,$$
which is the Hom-Jacobi identity.\index{identity!Hom-Jacobi}\index{Lie algebra!Hom-}
\begin{remark}\relabel{analogueLie}
One could ask whether an analogeous property of \prref{1.12} exists for Lie algebras in an additive, symmetric monoidal category (and thus, as a particular application, obtain similar results as in \seref{1} for monoidal Hom-Lie algebras). This question is addressed in the slightly more general setting of YB-Lie algebras in additive (not necessarily symmetric) monoidal categories in \cite{IV}.
\end{remark}
\section{The tensor Hom-algebra}\selabel{5}\index{algebra!tensor Hom-}
Let $\Cc$ be an abelian, cocomplete symmetric monoidal category such that, for any object $X$ in $\Cc$, the endofunctors $-\ot X$ and $X\ot -$ preserve denumerable coproducts.
We use the notation introduced in \seref{Hopfalgbraidedmonoidal}. Let $M\in \Cc$. We write $T^0(M)=I$, the unit object of $\Cc$, and $T^n(M)=
\ot^{t^n} (M,M,\cdots,M)$, and consider the coproduct
$$T(M)=\coprod_{n=0}^{\infty} T^n(M).$$
For $n,m\geq 0$, consider the morphism
\begin{eqnarray*}
&&\hspace*{-2cm}
b(t^n\vee t^m,t^{n+m})_{M,\cdots,M}:\
T^n(M)\ot T^m(M)=\ot^{t^n\vee t^m}(M,\cdots, M)\\
&\to&
T^{n+m}(M)= \ot^{t^{n+m}}(M,\cdots, M)\to T(M);
\end{eqnarray*}
in the case where $n=0$ or $m=0$, we consider
$$l_{T^m(M)}:\ I\ot T^m(M)\to T^m(M)~~;~~
r_{T^n(M)}:\ T^n(M)\ot I \to T^n(M).$$
Using the universal property of the coproduct, we obtain a morphism
$$m:\ T(M)\ot T(M)= \coprod_{n,m\geq 0} T^n(M)\ot T^m(M)\to T(M)$$
We also have the coproduct morphism $\eta:\ I\to T(M)$. Then $(T(M),m,\eta)$
is an algebra in $\Cc$. The associativity of $M$ follows from the uniqueness of
the natural transformation $b(\psi,\psi')$.\\
$T(M)$ satisfies the universal property of the tensor algebra. Let $A$ be an algebra
in $\Cc$, and $f:\ M\to A$ in $\Cc$. For every $n\geq 0$, we have the map
$$\ot^{t_n}(f,f,\cdots, f):\ \ot^{t^n}(M,\cdots, M)=T^n(M)\to \ot^{t^n}(A,\cdots, A).$$
We also have the multiplication map
$$m_A^n:\ \ot^{t^n}(A,\cdots, A)\to A.$$
Consider the composition
$$f_n=m_A^n\circ \ot^{t_n}(f,f,\cdots, f):\ T^n(M)\to A.$$
Using the universal property of the coproduct, we obtain
$\ol{f}:\ T(M)\to A$. $\ol{f}$ is an algebra map, and the diagram
$$\xymatrix{
M\ar[rr]^{i}\ar[rrd]_{f}&& T(M)\ar[d]^{\ol{f}}\\
&& A}$$
commutes. We can apply the universal property of the tensor algebra
to define a Hopf algebra structure on
$T(M)$, as in the classical case. Denote the tensor product of $T(M)$ with itself
by $T(M)\ol{\ot}T(M)$.
The morphism
$$r^{-1}_{T(M)}\circ i+l^{-1}_{T(M)}\circ i:\ M\to T(M)\ol{\ot} T(M)$$
in $\Cc$ induces $\Delta:\ T(M)\to T(M)\ol{\ot} T(M)$. The zero morphism
$0_{M,I}:\ M\to I$ induces $\varepsilon:\ T(M)\to I$. $\Hom_\Cc(M,M)$ is an abelian group,
and the identity morphism $M$ of $M$ has an oppositie $-M$. The map
$i\circ (-M):\ M\to T(M)^{\rm op}$ induces $S:\ T(M)\to T(M)^{\rm op}$.
\\It is easy to see that $\widetilde{\Hh}(\Mm_k)$ is an abelian category satisfying
the necessary conditions, so we can apply our construction to $(M,\mu)\in \widetilde{\Hh}(\Mm_k)$.
Now $T(M,\mu)=(T(M)=\coprod_{n=0}^{\infty} T^n(M), T(\mu))$, where $T(\mu)$ is
constructed as follows. For every $n\geq 0$, we have the automorphism
$\ot^{t_n}(\mu,\cdots,\mu)=T^n(\mu)$ of $T^n(M)$; also let $T^0(\mu)$ be the
identity map on $k$. For every $n\in \NN$,  consider
$i_n\circ T^n(\mu)$, where $i_n:\ T^n(M)\to T(M)$ is the natural inclusion. Applying
the universal property of the coproduct, we optain $T(\mu)$.\\
In order to describe the structure maps on $T(M)$, it is convenient to
identify $\ot^u(M,\cdots, M)$ and $\ot^{u'}(M,\cdots,M)$ using $b(u,u')$, for
any $u,u'\in U_n$. For example, take $m,n,p\in M$, and consider
$m\ot n\in T^2(M)$ and $p\in T^1(M)$. Then
$$(m\ot n)p=(m\ot n)\ot p=\mu(m)\ot (n\ot \mu^{-1}(p))\in T^3(M).$$
The unit element is $1\in k=T^0(M)$. On $M$, the cotensor product is defined as
follows:
$$\Delta(m)=1\ol{\ot} \mu^{-1}(m)+ \mu^{-1}(m)\ol{\ot} 1\in T(M)\ol{\ot} T(M).$$
The Hom-coassociativity of $\Delta$ can be verified directly for $m\in M$:
\begin{eqnarray*}&&\hspace*{-2cm}
(T(\mu)^{-1}\ol{\ot}\Delta)\Delta(m)\\
&=& 1\ol{\ot}1\ol{\ot}\mu^{-2}(m)+1\ol{\ot}\mu^{-2}(m) \ol{\ot} 1+ 
\mu^{-2}(m)\ol{\ot}1\ol{\ot}1\\
&=&
(\Delta\ol{\ot}T(\mu)^{-1})\Delta(m).
\end{eqnarray*}
$\Delta$ extends multiplicatively to $T(M)$. Let us finally compute $S$ on
$T^i(M)$ for $i=1,2,3$. Let $m,n,p\in M$.
$$S(m)=-m~~;~~S(m\ot n)=S(n)\ot S(m)=n\ot m;$$
$$S(m\ot (n\ot p))=S(n\ot p)\ot S(m)=-(p\ot n)\ot m=-\mu(p)\ot (n\ot \mu^{-1}(m)).$$

\section{Hom-ideals, coideals and Hopf ideals}\selabel{6}
Let $(M,\mu)\in \widetilde{\Hh}(\Mm_k)$. A $k$-submodule $N\subset M$ is
called a subobject of $(M,\mu)$ if $\mu$ restricts to an automorphism of $N$,
that is, $(N,\mu_{|N})\in \widetilde{\Hh}(\Mm_k)$. In this case,
$\mu$ induces an automorphism $\ol{\mu}$ of $M/N$ and
$(M/N,\ol{\mu})\in \widetilde{\Hh}(\Mm_k)$.\\
Now let $(A,\alpha)$ be a monoidal Hom-algebra. A subobject $I$ of $(A,\alpha)$ is
called a (two-sided) Hom-ideal of $(A,\alpha)$ if $(AI)A=A(IA)\subset I$. If
$X\subset A$ is a subset of $A$, then
$$I=\{\sum_i (a_i\alpha^{n_i}(x_i))b_i~|~
a_i,b_i\in A,~n_i\in \ZZ,~x_i\in X\}$$
is the Hom-ideal generated by $X$. Obviously, if $\alpha(X)=X$, then
$$I=\{\sum_i (a_ix_i)b_i~|~a_i,b_i\in A,~x_i\in X\}.$$
If $I$ is a Hom-ideal of $(A,\alpha)$, then $(A/I,\ol{\alpha})$ is a Hom-algebra.\\
Now let $(C,\gamma)$ be a monoidal Hom-coalgebra. A subobject $I$ of $(C,\gamma)$
is called a Hom-coideal if $\Delta(I)\subset I\ot C+C\ot I$ and $\varepsilon(I)=0$.
Then $(C/I,\ol{\gamma})$ is a Hom-coalgebra.\\
Let $(H,\alpha)$ be a monoidal Hom-Hopf algebra. A subobject $I$ of $(H,\alpha)$ is called
a Hom-Hopf ideal of $H$ if it is a Hom-ideal and a Hom-coideal and $S(I)\subset I$.
Then $(H/I,\ol{\alpha})$ is a Hom-Hopf algebra.

\section{The enveloping Hom-algebra of a monoidal Hom-Lie algebra}\selabel{7}
In \cite{Yau1}, the enveloping algebra of a Hom-Lie algebra is discussed.
One would expect that this is a Hom-Hopf algebra, but such a result does not
appear in \cite{Yau1}. Now consider a monoidal Hom-Lie algebra, this is
a Lie algebra $((L,\alpha),[,])$ in $\widetilde{\Hh}(\Mm_k)$, and take the tensor Hom-algebra
$(T(L),T(\alpha))$. Then
$$X=\{[x,y]-x\ot y-y\ot x~|~x,y\in L\}\subset T(L)$$
satisfies the condition $T(\alpha)(X)=X$, and we can consider the Hom-ideal $I$ 
generated by $X$. We now verify that $I$ is a Hom-Hopf ideal. Clearly
$\varepsilon(X)=0$. For $x,y\in L$, we easily compute that
\begin{eqnarray*}
&&\hspace*{-2cm}
\Delta([x,y]-x\ot y-y\ot x)
= ([x,y]-x\ot y +y\ot x)\ol{\ot} 1\\
&+& 1\ol{\ot} ([x,y]-x\ot y +y\ot x)\in I\ol{\ot} T(L)+T(L)\ol{\ot} I;\\
&&\hspace*{-2cm}
S([x,y]-x\ot y-y\ot x)=-([x,y]+y\ot x-x\ot y)\in I.
\end{eqnarray*}
Now we define the \textit{enveloping Hom-algebra}\index{algebra!enveloping Hom-} of $L$ as $(T(L)/I,\overline{T(\alpha)})$, which is a Hom-Hopf algebra.

\end{document}